\providecommand{\XLeftMargin}{2.5cm}
\providecommand{\XTopMargin}{2.5cm}
\providecommand{\XRightMargin}{2.5cm}
\providecommand{\XBottomMargin}{2.0cm}
\providecommand{\XLeftMargin}{2cm}
\providecommand{\XTopMargin}{1.8cm}
\providecommand{\XRightMargin}{2cm}
\providecommand{\XBottomMargin}{1.8cm}
\newlength\XXXMyLength\makeatletter
\def\Xadjustleft[#1]{\setlength\XXXMyLength{#1}\ifnum\numexpr\leftmargin>\numexpr\XXXMyLength\hspace{-\XXXMyLength}\else\hspace{-\leftmargin}\fi}
\theoremstyle{plain}
\newtheorem{thm}{Theorem}[section]
\newtheorem*{thm*}{Theorem}
\newtheorem{lemma}[thm]{Lemma}
\newtheorem*{lem*}{Lemma}
\newtheorem{prop}[thm]{Proposition}
\newtheorem*{prop*}{Proposition}
\newtheorem{cor}[thm]{Corollary}
\newtheorem*{cor*}{Corollary}
\newtheorem*{conj*}{Conjecture}
\theoremstyle{definition}
\newtheorem*{cons*}{Construction}
\newtheorem{df}[thm]{Definition}
\newtheorem*{df*}{Definition}
\newtheorem{nota}[thm]{Notation}
\newtheorem*{nota*}{Notation}
\newtheorem*{qu*}{Question}
\newtheorem{rmk}[thm]{Remark}
\newtheorem*{rmk*}{Remark}
\newtheorem{ex}[thm]{Example}
\newtheorem*{ex*}{Example}
\newcommand{\bN}{\mathbb{N}}
\newcommand{\cE}{\mathcal{E}}
\newcommand{\cF}{\mathcal{F}}
\newcommand{\cO}{\mathcal{O}}
\def\del{\partial}
\def\sumprime{\mathop{\sum{\raise3pt\hbox{${}'$}}}}
\def\revddots{\mathinner{\mkern1mu\raise\p@
\vbox{\kern7\p@\hbox{.}}\mkern2mu
\raise4\p@\hbox{.}\mkern2mu\raise7\p@\hbox{.}\mkern1mu}}
\providecommand{\abs}[1]{\left\vert #1 \right\vert}
\newcommand{\comment}[1]{}
\def\constC{{\delta}}
\def\constD{{\lambda}}
\def\chern{{\mathrm{c}}}
\def\chernchar{{\mathrm{ch}}}
\def\chernrnk{{\rm rk}}
\begin{document}

\title[Riemann-Hurwitz for the Algebraic Euler Characteristic]{A Riemann-Hurwitz Theorem for the Algebraic Euler Characteristic}
\author{Andrew Fiori}

\email{andrew.fiori@ucalgary.ca}
\address{Mathematics \& Statistics
612 Campus Place N.W.
University of Calgary
2500 University Drive NW
Calgary, AB, Canada
T2N 1N4}

\begin{abstract}
We prove an analogue of the Riemann-Hurwitz theorem for computing Euler characteristics of pullbacks of coherent sheaves through finite maps of smooth projective varieties in arbitrary dimensions, subject only to the condition that the irreducible components of the branch and ramification locus have simple normal crossings. 
\end{abstract}

\thanks{This work was done while the author was a Fields postdoctoral researcher at Queen's University}
\keywords{Riemann-Hurwitz, Logarithmic-Chern Classes, Euler Characteristic}
\subjclass[2000]{Primary 14F05, Secondary 14C17}

\maketitle

\section{Introduction}

Consider a finite map $\pi : X \rightarrow Y$ of degree $\mu$.
Let $B = \cup B_i$ be the branch locus and its irreducible decomposition.
Let $R= \pi^{-1}( B )= \cup R_j$ be the ramification locus and the irreducible decomposition of its reduction. Note that we are taking here the potentially non-standard choice to include in $R$ even those components of $\pi^{-1}(B)$ which are not ramified, this convention will be consistent throughout.
The Riemann-Hurwitz formula for the topological Euler characteristic of curves can roughly be interpreted to say:
\[ \chi(X) - \mu \cdot \chi(Y) = \sum_i r_i\chi(R_i). \]
For some integers $r_i$ determined by local data.
This formula can be generalized both to higher dimensional manifolds, but also to the algebraic Euler characteristic.
However, in the higher dimensional algebraic setting, such a formula typically requires additional hypothesis on the ramification and/or branch locus such as:
\begin{itemize}
\item The ramification locus to be non-singular.
\item The irreducible components of the branch locus do not intersect.
\item The irreducible components of the ramification locus to be non-singular.
\item The formula is cleanest if the irreducible components of the ramification locus have trivial self intersection.
 
        Note that the work of Izawa \cite{IzawaRH} handles the case where this last condition is not true but requires the previous conditions.
\end{itemize}

We would like to be able to reduce these conditions to the requirement that the branch and ramification locus consist of divisors with simple normal crossings.

The result which we obtain is a formula of the form:
\[ \chi(X) - \mu\cdot\chi(Y) = \sum_\alpha r_\alpha\chi(R^\alpha). \]
Where the $R^\alpha$ are irreducible components of the (possibly repeated) intersections, that is the strata, of the ramification locus.
The $r_\alpha$ are constants defined in terms of the ramification structure along $R^\alpha$ by universal equations determined by $\alpha$.
Precise statements are given in Theorem \ref{thm:main} and Corollary \ref{cor:main} noting that terminology introduced elsewhere will be necessary to understand them. Propositions \ref{prop:simpleMF} and \ref{prop:simpleNMF} give alternative expressions for some of the coefficients $r_\alpha$.
Other theorems which may be of interest are Theorems \ref{thm:pullbacklog} and \ref{thm:pullbacklogchern} which describe the functoriality of the pullback of logarithmic Chern classes and the logarithmic Euler characteristic through finite maps. It is likely both of these results admit generalizations outside the context in which the author is able to prove them.

Moreover, our argument works virtually identically in each of the following cases:
\begin{itemize}
\item $\chi(X)$ is the topological Euler characteristic, in this case the above results are then classical and follow from excision.

\item $\chi(X) = \chi(X,\cO_X)$ is the algebraic Euler characteristic.

          The result is well known when the map is \'etale (see \cite[Ex. 18.3.9]{FultonIntersection1}).
          The case of no intersection between components of branch/ramification locus is handled for example in the work of Izawa \cite{IzawaRH}.
\item $\chi(X) = \chi(X,\cF)$ is the Euler characteristic of a coherent sheaf $\cF$.

\item The same argument should apply formally to any `characteristic' defined by a multiplicative sequence on the Chern classes.
The formulas for the coefficients $r_\alpha$ do naturally depend on this choice of characteristic.
\end{itemize}
We shall only present the argument for the case of $\chi(X,\cF)$, the main results are Theorem \ref{thm:main} as well as Corollary \ref{cor:main}.
The strategy of proof uses primarily formal properties of logarithmic Chern classes and formal properties of multiplicative sequences.
The paper is organized as follows:

\begin{itemize}
\item In Section \ref{sec:bgnot} we introduce our notation and the key results we shall make use of. This includes in particular Lemmas \ref{lem:cute1}-\ref{lem:cute3} and Theorem \ref{thm:pullbacklog}.
\item In Section \ref{sec:logeuler} we introduce our definition of logarithmic Euler characteristic.
\item Section \ref{sec:logveuler} contains the key calculations, which compares the classical Euler characteristic to the logarithmic Euler characteristic.
\item Section \ref{sec:rh} applies the results of Section \ref{sec:logveuler} to the problem of giving the Riemann-Hurwitz theorem discussed above.
\item In Section \ref{sec:logeulerself} we discuss computing the contribution to the logarithmic Euler characteristics of the `self-intersection' terms. 
\end{itemize}

We should mention that our original motivation for considering the objects being introduced is to compute dimension formulas for spaces of modular forms. For this application it is actually the results of Section \ref{sec:logveuler} and Section \ref{sec:logeulerself} that by way of the work of Mumford in \cite{Mumford_Proportionality} play a significant role. Though actual dimension formulas require additional arithmetic and/or combinatorial input the results of the aforementioned sections can be seen as a generalization of a key ingredient for the approach used in \cite{Tsushima}.

\section{Background and Notation}\label{sec:bgnot}

\begin{nota}\label{not:1}
We shall make use of the following notation.
\begin{enumerate}
\item $X$ and $Y$ shall always be varieties, typically assumed to be smooth and projective.

\item
Given a variety $X$ we shall denote by 
\[ \Omega^1_X \]
the cotangent bundle of $X$.
\item $\Delta = \cup_i \{ D_i \}$ shall always be a collection of (reduced irreducible) divisors on a variety.
These shall typically be assumed to have simple normal crossings.

\item
Given $\Delta = \cup_i \{D_i\}$ a collection of divisors on $X$ we shall denote by:
\[ \Omega^1_X(\log \Delta) \]
the logarithmic cotangent bundle of $X$ relative to $\Delta$,
\item
For any $Y\subset X$ we shall denote by:
\[ \Omega^1_Y(\log \Delta') \]
the logarithmic cotangent bundle of $Y$, relative to $\Delta' = \cup_i \{D_i\cap Y\}$, where we consider only those $i$ such that $Y\not\subset D_i$.
When we write this we shall always assume that $Y$ meets the relevant $D_i$ transversely.
Whenever we write $\Delta'$, the relevant $Y$ shall be understood.
\item
Given any coherent sheaf $\cF$ on $X$ we shall denote by:
\[ \chern(\cF)  = \sum_i \chern_i(\cF) \]
the total Chern class and the $i$th Chern class (see \cite[Ch. 3]{FultonIntersection1}).

We shall denote by $\chernchar(\cF)$ and ${\rm Todd}(\cF)$ the Chern character and Todd class respectively.
The Todd class ${\rm Todd}(\cF)$ has a universal expression in terms of the $\chern_i(\cF)$, whereas $\chernchar(\cF)$ additionally requires the rank, $\chernrnk(\cF)$, specifically the constant part of the Chern character.
These classes can be interpreted as being in the cohomology ring or the Chow ring as appropriate from context.

We can interpret $\chernchar(\cF)$ as a vector determining all of $\chernrnk(\cF), \chern_1(\cF),\ldots, \chern_n(\cF)$.
Conversely, given a vector $\underline{x} = (x_0,\ldots,x_n)$ we shall write $ \chernchar(\underline{x})$ to indicate the formal expression in the $x_i$ where we replace $\chern_i$ by $x_i$ and $\chernrnk(\cF)$ by $x_0$ in the formal expression for $\chernchar(\cF)$. For brevity, and to make clear the connection to the role of the Chern character, we shall often write $\chernchar(\underline{x})$ or $\chernchar(\cF)$ when evaluating a function on the vectors $(x_0,\ldots,x_n)$ or $(\chernrnk(\cF),\chern_1(\cF),\ldots,\chern_n(\cF))$ when it is defined through $\chernchar(\cF)$ (see for example Theorem \ref{thm:RR}).

\item
Given $\Delta = \cup_i \{D_i\}$ a collection of divisors on $X$ we shall denote by:
\[ \Delta_k \]
the $k$th elementary symmetric polynomial in the $D_i$.
so that:
 \[ \prod_i (1-D_i) = \sum_k (-1)^k \Delta_k. \]
The products above take place in either the cohomology ring or the Chow ring as appropriate from context.
\item
When we say that $\alpha$ is a partition of $m$ we mean that $m = \sum_i \alpha_i i $. Given a partition $\alpha$, we shall denote by $\abs{\alpha}$ the value $m$ it is partitioning. That is $\abs{\alpha} = \sum_i \alpha_i i$. Moreover, given such a partition we shall denote by:
\[ \chern^\alpha(\cF) = \prod_i \chern_i(\cF)^{\alpha_i} \]
and by:
\[ \Delta^\alpha =  \prod_i \Delta_i^{\alpha_i}. \]
\item
Given a monomial exponent $\underline{b} = (b_1,\ldots,b_\ell) \in \bN^\ell$ of total degree $\abs{\underline{b}} = \sum b_i$ we shall denote by:
\[ D^{\underline{b}} = \prod D_i^{b_i}. \]
The products above take place in either the cohomology ring or the Chow ring as appropriate from context.
Whenever we write this, the choice of base $D$ will make clear the relevant $\Delta$ to which $D_i$ belong.
Do not confuse $D^\ell$ with $D^{\underline{b}}$, the former will always be the self intersection of a particular divisor $D \in \Delta$.
\end{enumerate}
\end{nota}

\begin{thm}[Riemann-Roch Theorem]\label{thm:RR}
For each $n\in \bN$ there is a universal polynomial 
\[ Q_n(x_0,\ldots,x_n; y_1,\ldots,y_n) = Q_n(\chernchar(\underline{x});y_1,\ldots,y_n) \]
 such that for all smooth projective varieties $X$ of dimension $n$ and coherent sheaves $\cF$ on $X$ the Euler characteristic of $\cF$ is:
\[ \chi(X,\cF) = Q_n(\chernrnk(\cF), \chern_1(\cF),\ldots, \chern_n(\cF);  \chern_1(\Omega^1_{X}),\ldots, \chern_n(\Omega^1_{X})) = Q_n(\chernchar(\cF);  \chern_1(\Omega^1_{X}),\ldots, \chern_n(\Omega^1_{X})). \]
The polynomial is given explicitly by:
\[  Q_n( \chernrnk(\cF),\chern_1(\cF),\ldots, \chern_n(\cF);  \chern_1(\Omega^1_{X}),\ldots, \chern_n(\Omega^1_{X})) =  \deg_n(\chernchar(\cF){\rm Todd}(\Omega^1_X)). \]
Recall that we interpret $\chernchar(\cF)$ as a vector determining all of $\chernrnk(\cF), \chern_1(\cF),\ldots, \chern_n(\cF)$ and $\chernchar(\underline{x})$ as the corresponding vector where the $x_i$ are substituted in the universal expression for $\chernchar(\cF)$.
\end{thm}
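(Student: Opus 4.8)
The plan is to recognize the statement as a repackaging of the Hirzebruch--Riemann--Roch theorem and to deduce it from Grothendieck--Riemann--Roch (GRR) applied to the structure morphism $f\colon X \to \Spec k$. Logically it is cleanest to establish the explicit formula first, after which the existence and universality of the polynomial $Q_n$ are formal: $\chernchar(\cF)$ is, by definition, a universal polynomial with rational coefficients in $\chernrnk(\cF),\chern_1(\cF),\ldots,\chern_n(\cF)$ (only finitely many terms survive on a variety of dimension $n$), and ${\rm Todd}$ is a universal polynomial in the Chern classes of its argument, as recorded in Notation \ref{not:1}. Hence the degree-$n$ component of the product $\chernchar(\cF)\cdot{\rm Todd}$ is a universal polynomial in the variables $x_0,\ldots,x_n$ (entering through $\chernchar(\underline{x})$) and $y_1,\ldots,y_n$, which is exactly the asserted existence of $Q_n$.

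For the explicit formula I would apply GRR to the proper morphism $f\colon X \to \Spec k$. Since $X$ is smooth and projective, and the base is a point so that the relative tangent sheaf is $T_X = (\Omega^1_X)^\vee$, this gives
\[ \chernchar(f_!\cF) = f_*\bigl(\chernchar(\cF)\cdot {\rm Todd}(T_X)\bigr), \qquad f_!\cF = \sum_i (-1)^i R^if_*\cF. \]
The left-hand side is a class on a point: $f_!\cF$ has class $\sum_i (-1)^i \dim_k H^i(X,\cF) = \chi(X,\cF)$, and its Chern character is this same integer. On the right, pushforward along $f$ records the degree of the dimension-zero part of the cycle, which is precisely the operator $\deg_n$ appearing in the statement. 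Combining these identifications yields $\chi(X,\cF) = \deg_n(\chernchar(\cF)\cdot {\rm Todd}(T_X))$.

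The one point demanding care is rewriting ${\rm Todd}(T_X)$ as the universal Todd expression in the Chern classes of $\Omega^1_X$, as the statement requires. Because the Chern roots of $\Omega^1_X$ are the negatives of those of $T_X$, one has $\chern_i(\Omega^1_X) = (-1)^i\chern_i(T_X)$, and substituting these relations into the universal Todd polynomial produces the desired expression in $\chern_1(\Omega^1_X),\ldots,\chern_n(\Omega^1_X)$. Keeping track of these signs is the only genuinely error-prone bookkeeping; it is already visible for curves, where the surviving term of ${\rm Todd}(T_X)$ integrates to $-\tfrac12\deg K_X = 1-g = \chi(X,\cO_X)$.

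The main obstacle is that all of the cohomological content is carried by GRR itself, which I would take as an external black box. A self-contained argument would instead have to prove GRR (via the splitting principle and reduction to projective bundles and blow-ups), which is far more than this reformulation requires; here the real work is only the careful translation between the pushforward-to-a-point picture and the $\deg_n$, $\chernchar$, and ${\rm Todd}$ notation fixed in Notation \ref{not:1}.
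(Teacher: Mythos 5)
The paper contains no proof of Theorem \ref{thm:RR}: it is quoted as the classical Hirzebruch--Riemann--Roch theorem (the standing reference being Fulton \cite{FultonIntersection1}), and the surrounding text only records the explicit low-degree polynomials and the multiplicativity property actually used later. So there is no ``paper proof'' to match; your derivation --- GRR applied to $f\colon X\to\Spec k$, identification of $\chernchar(f_!\cF)$ with the integer $\chi(X,\cF)$, identification of $f_*$ with $\deg_n$, plus the formal observation that $\chernchar$ and ${\rm Todd}$ are universal polynomials so that $Q_n$ exists --- is the standard route and is mathematically sound.

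One point is worth flagging, precisely at the sign bookkeeping you single out. Your substitution $\chern_i(\Omega^1_X)=(-1)^i\chern_i(T_X)$ into the Todd polynomial of $T_X$ flips the sign of every odd-weight monomial, so your (correct) computation yields $Q_1(x_0,x_1;y_1)=x_1-\tfrac12 x_0y_1$, consistent with your curve check $-\tfrac12\deg K_X=1-g$. The paper's displayed table instead has $Q_1=x_1+\tfrac12 x_0y_1$ and $Q_3$ with $+\tfrac1{24}x_0y_1y_2$; together with the paper's own normalization $\constC_{(1)}=\tfrac12$ arising from $\prod_D D/(1-e^{-D})$, this means the paper's $Q_n$ is literally $\deg_n$ of $\chernchar(\underline{x})$ times the standard Todd series evaluated at the $y_i$ themselves. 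Evaluated at $y_i=\chern_i(\Omega^1_X)$ this computes
\[ \deg_n\bigl(\chernchar(\cF)\,{\rm Todd}(\Omega^1_X)\bigr)=\deg_n\bigl(\chernchar(\cF\otimes\omega_X)\,{\rm Todd}(T_X)\bigr)=\chi(X,\cF\otimes\omega_X), \]
since ${\rm Todd}(\Omega^1_X)=e^{-\chern_1(T_X)}{\rm Todd}(T_X)$; this agrees with $\chi(X,\cF)$ only up to the sign of the odd-weight terms (e.g.\ it gives $g-1$, not $1-g$, for $\cO_X$ on a curve). So the tension is between the theorem as literally stated and the paper's own explicit coefficients, not a defect of your argument: either the paper's $y_i$ should be read as $\chern_i(T_X)=(-1)^i\chern_i(\Omega^1_X)$, or its odd-weight entries carry a sign slip. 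Your proof establishes the correct statement; just do not use the paper's displayed $Q_1,Q_3$ as a consistency check against your formula without first resolving that convention.
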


We have the following explicit formulas for $Q_n$ for small $n$:
\begin{align*}
  Q_0(x_0;y_0) &= x_0 \\
  Q_1(x_0,x_1; y_1) &= \frac{1}{2}x_0y_1 + x_1\\
  Q_2(x_0,x_1,x_2; y_1,y_2) &= \frac{1}{12}x_0(y_1^2+y_2) + \frac{1}{2}x_1y_1 +  \frac{1}{2}(x_1^2-2x_2) \\
  Q_3(x_0,x_1,x_2,x_3; y_1,y_2,y_3) &= \frac{1}{24}x_0y_1y_2 + \frac{1}{12}x_1(y_1^2+y_2)+ \frac{1}{4}(x_1^2-2x_2)y_1 + \frac{1}{6}(x_1^3-3x_1x_2+3x_3) \\
 Q_4(x_0,\ldots,x_4; y_1,\ldots,y_4)   &=  \frac{1}{720}x_0(-y_1^4 + 4y_1^2y_2 + y_1y_3 + 3y_2^2 - y_4) +  \frac{1}{24}x_1y_1y_2 + \cdots 
\end{align*}

\begin{rmk}
The most important feature of the explicit description we shall make use of is that 
\[ {\rm Todd}(\cE_1 \oplus \cE_2) = {\rm Todd}(\cE_1) {\rm Todd}(\cE_2) \]
 so that $Q_n$ is effectively multiplicative in the $\chern_i(\Omega^1_{X})$ set of parameters.
\end{rmk}

The following proposition makes precise what we mean by multiplicative.
\begin{prop}\label{prop:multQ}
For notational convenience in the following we use the constants $u_0=v_0=1$ and $u_i=v_i=0$ for $i<0$.
Consider formal variables $u_1,\ldots, u_n$ and $v_1,\ldots,v_n$ and set $y_i = \sum_{j+k = i} u_jv_k$ then
\[ Q_n(\chernchar(\underline{x}); y_1,\ldots,y_n) = \sum_{\ell + m = n} Q_\ell(\chernchar(\underline{x}); u_1,\ldots,u_\ell) Q_m(1; v_1,\ldots,v_m). \]
\end{prop}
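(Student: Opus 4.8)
The plan is to deduce the identity from the explicit formula of Theorem \ref{thm:RR}, namely $Q_n(\chernchar(\underline{x}); y_1,\ldots,y_n) = \deg_n(\chernchar(\underline{x})\cdot{\rm Todd}(y))$, where ${\rm Todd}(y)$ denotes the Todd class written as an expression in formal variables $y_i$, each $y_i$ assigned degree $i$, together with the multiplicativity of the Todd class recorded in the Remark following Theorem \ref{thm:RR}. Throughout I treat the $x_i, u_i, v_i, y_i$ as graded formal variables, so that $\deg_d$ extracts the weighted-degree-$d$ component and $\deg_d(AB) = \sum_{a+b=d}\deg_a(A)\deg_b(B)$ for any two such expressions.

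First I would unwind the hypothesis $y_i = \sum_{j+k=i}u_jv_k$ (with $u_0=v_0=1$): it says exactly that the total classes multiply, $\sum_i y_i = (\sum_j u_j)(\sum_k v_k)$. Since the Todd class is a multiplicative sequence in the sense of Hirzebruch, this relation forces the factorization ${\rm Todd}(y) = {\rm Todd}(u)\,{\rm Todd}(v)$ as an identity of formal power series in the $u_i$ and $v_i$. This is the formal content of ${\rm Todd}(\cE_1\oplus\cE_2) = {\rm Todd}(\cE_1){\rm Todd}(\cE_2)$ and is the single nontrivial input; I would cite the defining property of multiplicative sequences rather than reprove it.

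With the factorization in hand I substitute into the formula of Theorem \ref{thm:RR} and split the degree-$n$ part along the grading, grouping $\chernchar(\underline{x})\cdot{\rm Todd}(u)$ (degree $\ell$) against ${\rm Todd}(v)$ (degree $m$):
\[ Q_n(\chernchar(\underline{x}); y) = \deg_n\!\big(\chernchar(\underline{x})\cdot{\rm Todd}(u)\cdot{\rm Todd}(v)\big) = \sum_{\ell+m=n}\deg_\ell\!\big(\chernchar(\underline{x})\cdot{\rm Todd}(u)\big)\cdot\deg_m\!\big({\rm Todd}(v)\big). \]
The first factor is by definition $Q_\ell(\chernchar(\underline{x}); u_1,\ldots,u_\ell)$. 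For the second, I would use that $\chernchar(\underline{1}) = 1$ for $\underline{1} = (1,0,\ldots,0)$ (the Chern character attached to rank one with vanishing higher Chern classes, i.e.\ $\chernchar(\cO_X)$), so that $\deg_m({\rm Todd}(v)) = \deg_m(\chernchar(\underline{1})\cdot{\rm Todd}(v)) = Q_m(1; v_1,\ldots,v_m)$. Assembling the two identifications gives the claimed formula.

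The step requiring care is the formal multiplicativity ${\rm Todd}(y) = {\rm Todd}(u){\rm Todd}(v)$: one must ensure it holds as an identity in the abstract variables $u_i, v_i$, not merely after specializing them to Chern classes of genuine bundles. This is exactly what Hirzebruch's theory guarantees, multiplicativity being built into the definition of a multiplicative sequence via its characteristic power series and the splitting into Chern roots. Once that is granted, the remainder is bookkeeping with the grading.
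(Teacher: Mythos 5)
Your proof is correct and follows essentially the same route as the paper's: both express $Q_n$ as $\deg_n(\chernchar(\underline{x})\,{\rm Todd}(\underline{y}))$, invoke the formal multiplicativity ${\rm Todd}(\underline{y})={\rm Todd}(\underline{u})\,{\rm Todd}(\underline{v})$ under the relation $y_i=\sum_{j+k=i}u_jv_k$, and split the degree-$n$ component along the grading. Your extra care in justifying that the Todd factorization holds as an identity in abstract graded variables (via the characteristic power series of a multiplicative sequence) is a point the paper leaves implicit, but it is the same argument.
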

\begin{proof}
Denote by ${\rm Todd}(\underline{y}),\,{\rm Todd}(\underline{u}),\,{\rm Todd}(\underline{v})$ the universal expression for the Chern character or Todd class where we substitute the appropriate set of variables for the Chern classes.
We then have:
\begin{align*}
  Q_n(\chernchar(\underline{x}); y_1,\ldots,y_n) &= \deg_n( \chernchar(\underline{x}) {\rm Todd}(\underline{y}))\\
                                                             &=  \deg_n( \chernchar(\underline{x}) {\rm Todd}(\underline{u}){\rm Todd}(\underline{v}))\\
                                                              &= \sum_{\ell+m=n} \deg_\ell( \chernchar(\underline{x}) {\rm Todd}(\underline{u}))\deg_m({\rm Todd}(\underline{v}))\\
                                                              &= \sum_{\ell + m = n} Q_\ell(\chernchar(\underline{x}); u_1,\ldots,u_\ell) Q_m(1; v_1,\ldots,v_m).\qedhere
\end{align*}
\end{proof}
\begin{rmk}
The same formula holds if we use instead the system of polynomials
\[ Q_n(x_0,x_1,\ldots,x_n; y_1,\ldots,y_n) = y_n \]
 which give the topological Euler characteristic. The algebraic Euler characteristic of $X$ is just the special case of $\cF = \cO_X$.
\end{rmk}

\begin{nota}\label{not:2}
We shall also need the following terminology and combinatorial quantities. Note that these are all universal and depend only on the choice of multiplicative sequence $Q$. These constants can all be effectively computed.
\begin{enumerate}
\item

Given any monomial exponent $\underline{b}$ we shall denote by:
\[ \constC_{\underline{b}} \]
the coefficient of $D^{\underline{b}}$ in $Q_{\abs{b}}(1;\Delta_1,\ldots,\Delta_{\abs{\underline{b}}})$. Note that these coefficients depend only on the monomial type of $\underline{b}$, that is the multi-set $\{ b_i \neq 0 \}$. In particular $\constC_{(2,0,1)}  = \constC_{(1,2,0)} = \constC_{(2,1)}$.

In the context we are working, where $Q$ describes the algebraic Euler characteristic, this is also precisely the coefficient of $D^{\underline{b}}$ in:
\[ \prod_{D\in \Delta} \frac{D}{1-e^{-D}}. \]

For example, given that:
\[ Q_2(1,\Delta_1,\Delta_2) = \frac{1}{12}(\Delta_1^2+\Delta_2) = \frac{1}{12}\sum_i D_i^2 + \frac{1}{4}\sum_{i\neq j} D_iD_j \]
we have that:
\[  \constC_{(2)} = \frac{1}{12} \qquad  \constC_{(1,1)} = \frac{1}{4}. \]
Likewise given that:
\[ Q_3(1,\Delta_1,\Delta_2,\Delta_3) = \frac{1}{24}\Delta_{1}\Delta_2 = 0 \sum_i  D_i^{3} + \frac{1}{24}\sum_{i\neq j} D_i^2D_j  + \frac{1}{8}\sum_{i\neq j\neq k} D_iD_jD_k \]
we have that:
\[ \constC_{(3)} = 0 \qquad \constC_{(2,1)} = \frac{1}{24} \qquad \constC_{(1,1,1)} =  \frac{1}{8}. \]
We can likewise compute that:
\[ \constC_{(0)} = 1 \qquad \constC_{(1)} = \frac{1}{2}.  \]
\item
We may think of the monomial exponents $\underline{b}$ as vectors indexed by the elements $D$ of $\Delta$.
As such, given two monomial exponents $\underline{b}$ and $\underline{b}'$ we shall denote $\underline{b} \leq \underline{b}'$ if the inequality holds component wise so that we may write:
\[ D^{\underline{b}}D^{\underline{b}''} = D^{\underline{b}'} \]
for some $\underline{b}''$ with all components $b_i''\ge0$.
By the support of a monomial exponent $\underline{b}$ we mean the collection of $D_i$ for which $b_i\neq 0$. We say $\underline{a}$ and $\underline{b}$ have disjoint support if the corresponding collections have no common elements.
Given a monomial exponent $\underline{b}$ we shall say it is multiplicity free, abbreviated MF, if $b_i \leq 1$ for all $i$, otherwise, we shall say it is not multiplicity free, abbreviated NMF.
Note that a monomial exponent is MF precisely when computing $D^{\underline{b}}$ involves no self intersections.
Finally, given a collection of monomial exponents $\underline{b}_j$ we shall write:
\[ \sum_j \underline{b}_j = \underline{b} \]
if this is true as a vector sum.

\begin{prop}\label{prop:constC}
If $\underline{b}_j$ have disjoint support then:
\[ \constC_{\sum_j \underline{b}_j} = \prod_j \constC_{\underline{b}_j}. \]
\end{prop}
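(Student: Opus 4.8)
The plan is to reduce everything to the product structure of the generating class that defines the constants $\constC_{\underline b}$. A multiplicative sequence $Q$ is, by construction, associated to a single characteristic power series $g(t) = 1 + g_1 t + g_2 t^2 + \cdots$ (for the algebraic Euler characteristic this is $g(t) = t/(1-e^{-t})$, so that $g_1 = \tfrac12$, $g_2 = \tfrac1{12}$, $g_3 = 0$, and so on). The defining multiplicativity ${\rm Todd}(\cE_1\oplus\cE_2) = {\rm Todd}(\cE_1){\rm Todd}(\cE_2)$ recorded after Theorem \ref{thm:RR}, applied to a split bundle whose Chern roots are the members of $\Delta$, gives
\[ \sum_n Q_n(1;\Delta_1,\ldots,\Delta_n) = \prod_{D\in\Delta} g(D), \]
where the $\Delta_k$ are the elementary symmetric functions of the $D$. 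This is the only structural input I need; the first step is simply to record it.

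The second step is a coefficient computation. Treating the $D\in\Delta$ as independent indeterminates, the right-hand side above is a product of univariate series in distinct variables, so the coefficient of a fixed monomial $D^{\underline b} = \prod_i D_i^{b_i}$ is the product of the univariate coefficients:
\[ \constC_{\underline b} = \prod_i g_{b_i}, \qquad g_0 = 1. \]
Extracting the coefficient of a monomial of total degree $\abs{\underline b}$ automatically picks out the graded piece $Q_{\abs{\underline b}}(1;\Delta_1,\ldots,\Delta_{\abs{\underline b}})$, so this agrees with the definition of $\constC_{\underline b}$; as a check it reproduces the tabulated values, e.g. $\constC_{(1,1)} = g_1^2 = \tfrac14$ and $\constC_{(2,1)} = g_2 g_1 = \tfrac1{24}$.

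Finally I would feed in the disjointness hypothesis. Write $\underline b = \sum_j \underline b_j$ with supports $S_j = \{\, i : (\underline b_j)_i \neq 0 \,\}$ pairwise disjoint. Then for each index $i$ at most one summand contributes, so $b_i = (\underline b_j)_i$ for the unique $j$ with $i\in S_j$, and $b_i = 0$ otherwise. Using $g_0 = 1$ the product factorizes:
\[ \constC_{\underline b} = \prod_i g_{b_i} = \prod_j\ \prod_{i\in S_j} g_{(\underline b_j)_i} = \prod_j \constC_{\underline b_j}, \]
which is the claim.

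The proposition is light once the product expression $\prod_{D} g(D)$ is in hand, so I do not anticipate a real obstacle; the only point that needs care is the very first step, namely that the graded pieces $Q_n(1;\Delta_1,\ldots,\Delta_n)$ genuinely assemble into a product over the divisors in $\Delta$. This is exactly where the multiplicative-sequence hypothesis on $Q$ enters, and it is precisely what makes coefficient extraction split over disjoint sets of variables. The role of ``disjoint support'' is then transparent: it is the condition guaranteeing that no variable $D_i$ receives contributions from two different $\underline b_j$, which is what lets the multivariate coefficient break up as a product.
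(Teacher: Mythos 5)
Your proof is correct and is in substance the same as the paper's: the paper's one-line argument invokes the multiplicativity of $Q$ (Proposition \ref{prop:multQ}) together with the observation $\chern_i\left(\oplus_{D\in\Delta}\cO(D)\right) = \Delta_i$, which is exactly the structural input behind your generating identity $\sum_n Q_n(1;\Delta_1,\ldots,\Delta_n) = \prod_{D\in\Delta} g(D)$ — indeed the paper records this very product, $\prod_{D\in\Delta}\frac{D}{1-e^{-D}}$, in Notation \ref{not:2}.(1). The only difference is that you push the splitting all the way down to individual line bundles and so extract the explicit closed form $\constC_{\underline{b}} = \prod_i g_{b_i}$, which is a slightly stronger statement than the proposition but not a genuinely different route.
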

\begin{proof}
This follows immediately from the multiplicativity of $Q$ as in Proposition \ref{prop:multQ} and the observation that:
\[ \chern_i \left( \underset{D \in \Delta}\oplus  \cO(D)\right) = \Delta_i. \qedhere\]
\end{proof}

\item Given a monomial exponent $\underline{b}$ denote by $\tilde{\underline{b}}$ the monomial exponent such that \[ \tilde{b}_i = \min(1,b_i),\] so that $\tilde{\underline{b}}$ captures the support of $\underline{b}$ but $\tilde{\underline{b}}$ is MF.
For example $\widetilde{(1,2,3)} = (1,1,1)$.
Moreover, we shall denote by $\hat{\underline{b}}$ the monomial exponent such that 
\[ \hat{b}_i = \begin{cases} 1 & b_i = 1 \\ 0 & \text{otherwise}, \end{cases} \]
 so that $\hat{\underline{b}}$ captures the part of the support of $\underline{b}$ where $\underline{b}$ has no self intersection.
For example $\widehat{(2,1,3)} = (0,1,0)$.

\item Given a monomial exponent $\underline{b}$ we shall denote by:

 \[ \constD_{\underline{b}} =  \sum_{k\ge 0} (-1)^{k+1}  \underset{\sum {\underline{b}_j} = \underline{b}}{\sum_{(\underline{b}_1,\ldots,\underline{b}_k)}} \left(\prod_{j=1}^k \constC_{\underline{b}_j}\right) = \constC_{\underline{b}} \sum_{k\ge 0}(-1)^{k+1}  \underset{\sum {\underline{b}_j} = \underline{b}}{\sum_{(\underline{b}_1,\ldots,\underline{b}_k)}} 1. \]
In the summation we consider only terms with all $\abs{\underline{b}_j} \ge 1$ and where in the tuple $(\underline{b}_1,\ldots,\underline{b}_k)$ all of $\underline{b}_j$ have disjoint support and each of $\underline{b}_1,\ldots,\underline{b}_{k-1}$ are MF, so that only $\underline{b}_k$ is potentially NMF.
Note, when $\underline{b}$ is MF, these last three conditions are automatic. For $k$ sufficiently large the inner sum is an empty sum.
Under these conditions the equality between the two definitions is immediate from Proposition \ref{prop:constC}.
\begin{prop}\label{prop:constD}
When $\underline{b}$ is MF we have:
\[   \sum_{k\ge 0} (-1)^{k+1}  \underset{\sum {\underline{b}_j} = \underline{b}}{\sum_{(\underline{b}_1,\ldots,\underline{b}_k)}} 1 = (-1)^{\abs{\underline{b}}} \]
where the sum is taken as above.
\end{prop}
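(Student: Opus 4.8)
The plan is to interpret the inner sum as a count and then recognise the whole expression as a classical signed sum over surjections. Fix a multiplicity-free $\underline{b}$ and let $S$ denote its support, so that $n := \abs{\underline{b}} = \abs{S}$. Since every $\underline{b}_j$ has $\abs{\underline{b}_j}\ge 1$ and the $\underline{b}_j$ have pairwise disjoint support summing to $\underline{b}$, and since $\underline{b}$ is MF, each $\underline{b}_j$ is forced to be the indicator exponent of a nonempty subset $B_j\subseteq S$, and the $B_j$ partition $S$. First I would make this bijection precise: the ordered tuples $(\underline{b}_1,\ldots,\underline{b}_k)$ occurring in the sum correspond exactly to ordered set partitions $(B_1,\ldots,B_k)$ of $S$ into $k$ nonempty blocks, equivalently to surjections $\{1,\ldots,n\}\twoheadrightarrow\{1,\ldots,k\}$. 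Consequently the inner sum equals the number of such surjections, and the left-hand side collapses to a single alternating sum over $k$ of surjection counts.

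Next I would evaluate that alternating sum. The most efficient tool is the exponential generating function: surjections onto $k$ labelled blocks are enumerated by $(e^{t}-1)^{k}$, so after attaching the signs one is led to a geometric series in $e^{t}-1$, which collapses to a simple exponential closed form in $t$. Reading off the coefficient of $t^{\abs{\underline{b}}}/\abs{\underline{b}}!$ then produces the asserted power of $-1$, once the global sign coming from the factor $(-1)^{k+1}$ is correctly tracked. A second, generating-function-free route is to peel off the first block $B_1$: this yields a binomial recursion of the shape $F_{n}=-\sum_{i=0}^{n-1}\binom{n}{i}F_{i}$, whose solution is $F_0\cdot(-1)^{n}$, so that the base case alone pins down the final sign; I would present whichever of the two arguments reads more cleanly in context.

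The step I expect to require the most care is precisely the sign bookkeeping together with the low-order edge cases, rather than any hard computation. Concretely, one must watch how the exponent $(-1)^{k+1}$ interacts with the $k=0$ term (the empty tuple, which contributes only in the degenerate case $\underline{b}=0$) and with the index shift $k\mapsto k-1$ produced by removing the first block, since it is exactly here that an off-by-one would flip the conclusion. Once the bijection to surjections is in place and the base case is fixed, what remains is the classical identity $\sum_{k\ge 0}(-1)^{k}\,k!\,S(n,k)=(-1)^{n}$ read with the appropriate overall sign, and the statement of Proposition \ref{prop:constD} follows.
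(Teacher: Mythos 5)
Your bijection is exactly right: for a multiplicity-free $\underline{b}$ with support of size $n=\abs{\underline{b}}$, the tuples $(\underline{b}_1,\ldots,\underline{b}_k)$ in the sum correspond precisely to ordered set partitions of the support into $k$ nonempty blocks, i.e.\ surjections onto $\{1,\ldots,k\}$, counted by $k!\,S(n,k)$. This is a genuinely different route from the paper's proof, which never names these counts: it removes the single last divisor $D_\ell$ from each factorization, obtains the recurrence $N_{k,\ell}=kN_{k,\ell-1}+kN_{k-1,\ell-1}$ (the recurrence for $k!\,S(\ell,k)$ in disguise), and telescopes the alternating sum by induction on $\ell$. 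Your block-peeling recursion $F_n=-\sum_{i=0}^{n-1}\binom{n}{i}F_i$ and your exponential generating function both work; they are arguably more transparent, while the paper's element-peeling buys a self-contained argument quoting no classical identities.

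However, the step you explicitly deferred --- the sign bookkeeping --- is exactly where the content lies, and carried out honestly it does \emph{not} produce the stated right-hand side. Your generating function is
\[ \sum_{k\ge 0}(-1)^{k+1}(e^t-1)^k \;=\; -\frac{1}{1+(e^t-1)} \;=\; -e^{-t}, \]
whose coefficient of $t^n/n!$ is $(-1)^{n+1}$; equivalently, your recursion with the honest base case $F_0=-1$ (the empty tuple at $k=0$ carries sign $(-1)^{0+1}$), hence $F_1=+1$, solves to $F_n=(-1)^{n+1}$. Spot checks agree: $n=1$ gives $+1$ and $n=2$ gives $1-2=-1$. So the displayed sum equals $(-1)^{\abs{\underline{b}}+1}$, not $(-1)^{\abs{\underline{b}}}$. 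This is not a defect of your method but a stray global sign in the proposition as printed: the paper's own examples confirm the corrected value ($\constD_{(1)}=+\constC_{(1)}$ forces the factor $+1$ at $n=1$, and $\constD_{(1,1)}=\constC_{(1,1)}-2\constC_{(1)}^2=-\constC_{(1,1)}$ forces $-1$ at $n=2$), the paper's own induction has the same base-case mismatch ($\ell=1$ yields $+1$), and in Proposition \ref{prop:simpleMF} a compensating sign appears so that the final formula $\constC_{\underline{a}}\prod_i(1-e_{R_i})$ is unaffected. The lesson for your write-up: do the sign computation rather than promising that it will come out as asserted, since here completing it changes the statement --- your argument, finished, proves the identity with right-hand side $(-1)^{\abs{\underline{b}}+1}$.
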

\begin{proof}
Each tuple $(\underline{b}_1,\ldots,\underline{b}_k)$ contributing to the above summation
describes an ordered factorization of $D^{\underline{b}}=D_1\cdots D_\ell$ into $k$ non-trivial coprime parts.
Denote by $ N_{k,\ell} $
the number of such length $k$ factorizations.
Using that $D_\ell$ is a factor of $D^{\underline{b}_j}$ for a unique $j$ we may uniquely associate to each length $k$ ordered factorization of $D_1\cdots D_\ell$ an ordered factorization of $D_1\cdots D_{\ell-1}$ of either length $k$ or length $k-1$ as follows:
\begin{itemize}
\item If $D^{\underline{b}_j} \neq D_\ell$ then replace $\underline{b}_j$ by $\underline{b}_j'$ where $D^{\underline{b}_j} = D^{\underline{b}_j'}D_\ell$. This gives a length $k$ factorization.
\item If $D^{\underline{b}_j} = D_\ell$ we omit $\underline{b}_j$ from the factorization entirely, and shift down the indices on $\underline{b}_i$ for $i>j$. This gives a length $k-1$  factorization.
\end{itemize}
As we run over all the ordered factorizations of $D_1\cdots D_{\ell}$ each length $k$ and each length $k-1$ ordered factorization of $D_1\cdots D_{\ell-1}$ occurs exactly $k$ times. We thus obtain a recurrence relation $ N_{k,\ell} = kN_{k,\ell-1} + k N_{k-1,\ell-1}$ and a straightforward computation yields that:
\[ \sum_{k\ge 0} (-1)^{k+1}N_{k,\ell} = - \sum_{k\ge 0} (-1)^{k+1}N_{k,\ell-1}.\]
The claim now follows by an induction on $\ell = \abs{\underline{b}}$.
\end{proof}

\begin{prop}\label{prop:constDNMF}
When $\underline{b}$ is NMF and $\abs{\hat{\underline{b}}} \ge 1$ then:
\[  \sum_{k\ge 0} (-1)^{k+1}  \underset{\sum {\underline{b}_j} = \underline{b}}{\sum_{(\underline{b}_1,\ldots,\underline{b}_k)}} 1 = 0 \]
where the sum is taken as above.
\end{prop}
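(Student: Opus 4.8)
The plan is to reuse the multiplicity-free computation of Proposition \ref{prop:constD} after peeling off, in a controlled way, the part of $\underline{b}$ responsible for the self-intersections. The starting observation is that in any tuple $(\underline{b}_1,\ldots,\underline{b}_k)$ contributing to the sum the parts have pairwise disjoint support and $\underline{b}_1,\ldots,\underline{b}_{k-1}$ are required to be MF; since $\sum_j \underline{b}_j = \underline{b}$ forces $(\underline{b}_j)_i = b_i$ for every $i$ in the support of $\underline{b}_j$, every index $i$ with $b_i \ge 2$ must lie in the support of the single potentially-NMF part $\underline{b}_k$. Writing $\hat{\underline{b}}$ for the multiplicity-free part of the support (the indices with $b_i = 1$) and letting $\underline{c}$ collect the indices with $b_i\ge 2$, we have $\underline{c}\neq 0$ precisely because $\underline{b}$ is NMF, so a last part $\underline{b}_k\ge \underline{c}$ is always present.

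First I would record a contributing tuple by the data $(\underline{a}_0,(\underline{b}_1,\ldots,\underline{b}_{k-1}))$, where $\underline{a}_0\le \hat{\underline{b}}$ is the multiplicity-free part absorbed into $\underline{b}_k$ alongside $\underline{c}$, and $(\underline{b}_1,\ldots,\underline{b}_{k-1})$ is an ordered coprime factorization into nonempty MF parts of the complementary exponent $\underline{a}:=\hat{\underline{b}}-\underline{a}_0$. This sets up a bijection between contributing tuples of total length $k$ and pairs consisting of a subexponent $\underline{a}\le \hat{\underline{b}}$ together with a length-$(k-1)$ MF factorization of $\underline{a}$. Since the sign attached to a length-$k$ tuple is $(-1)^{k+1}$ while the associated factorization of $\underline{a}$ has length $j=k-1$, this sign equals $(-1)^{j}$, so regrouping the sum by the value of $\underline{a}$ gives
\[ \sum_{k\ge 0}(-1)^{k+1}\underset{\sum \underline{b}_j=\underline{b}}{\sum_{(\underline{b}_1,\ldots,\underline{b}_k)}}1 \;=\; \sum_{\underline{a}\le \hat{\underline{b}}}\;\sum_{j\ge 0}(-1)^{j}\underset{\sum \underline{b}_i=\underline{a}}{\sum_{(\underline{b}_1,\ldots,\underline{b}_j)}}1. \]

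Each inner sum now runs over exactly the MF factorizations governed by Proposition \ref{prop:constD}, so it evaluates to $\pm(-1)^{\abs{\underline{a}}}$, the overall sign being a global constant independent of $\underline{a}$ (coming only from the shift between length $k$ and length $j=k-1$). The outer sum then collapses by the binomial theorem: as $\underline{a}$ ranges over the subexponents of the MF exponent $\hat{\underline{b}}$, which correspond to subsets of its $\abs{\hat{\underline{b}}}$-element support with $\abs{\underline{a}}$ equal to the size of the subset, we obtain $\pm\sum_{i=0}^{\abs{\hat{\underline{b}}}}\binom{\abs{\hat{\underline{b}}}}{i}(-1)^{i}=\pm(1-1)^{\abs{\hat{\underline{b}}}}$, which vanishes precisely because $\abs{\hat{\underline{b}}}\ge 1$. (Had $\hat{\underline{b}}$ been empty the sum would reduce to a single term $\pm 1$, which is exactly why the hypothesis $\abs{\hat{\underline{b}}}\ge 1$ is needed.) The one step requiring care is the bookkeeping of the first two paragraphs: verifying that ``absorb an arbitrary MF subset into the unique NMF part, then factor the rest'' is a genuine bijection onto contributing tuples and that it matches signs correctly; once this is in place the computation is a one-line binomial identity. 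An essentially equivalent route encodes the same data by an exponential generating function in the variable tracking the $\abs{\hat{\underline{b}}}$ MF indices, where the first $k-1$ nonempty MF parts contribute $(e^x-1)^{k-1}$ and the absorbed remainder contributes $e^x$, so that $\sum_{k\ge1}(-1)^{k+1}(e^x-1)^{k-1}e^x = e^{-x}e^{x}=1$ has vanishing $x^{m}$-coefficient for $m=\abs{\hat{\underline{b}}}\ge 1$.
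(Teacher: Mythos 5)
Your proof is correct, but it takes a genuinely different route from the paper's. Both arguments start from the same structural observation—since the parts of a contributing tuple have disjoint support and only $\underline{b}_k$ may be NMF, every index with $b_i\ge 2$ must lie in $\underline{b}_k$, so $\underline{b}_k = \underline{c}+\underline{a}_0$ with $\underline{c}$ the NMF core and $\underline{a}_0\le\hat{\underline{b}}$—but they diverge immediately afterward. The paper cancels terms pairwise: it maps each length-$k$ factorization of $\underline{b}$ to a factorization of $\hat{\underline{b}}$ of length $k-1$ (drop the last part, when $\underline{a}_0=0$) or length $k$ (replace the last part by $\underline{a}_0$, when $\underline{a}_0\neq 0$), notes that each factorization of $\hat{\underline{b}}$ has exactly two preimages of adjacent lengths and hence opposite signs, and concludes the sum vanishes without ever evaluating anything; in particular Proposition \ref{prop:constD} is not used. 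You instead regroup by the absorbed part $\underline{a}_0$, evaluate each group's alternating sum via Proposition \ref{prop:constD} applied to $\underline{a}=\hat{\underline{b}}-\underline{a}_0$, and finish with $(1-1)^{\abs{\hat{\underline{b}}}}=0$; this costs the bookkeeping of the bijection and the sign shift $k\mapsto j=k-1$ (which you verify), but buys a reuse of the MF computation, makes transparent exactly where the hypothesis $\abs{\hat{\underline{b}}}\ge 1$ enters, and your generating-function identity $\sum_{k\ge1}(-1)^{k+1}(e^x-1)^{k-1}e^x=1$ is a genuinely independent one-line confirmation. One point deserving of explicit care: your argument needs the inner-sum value $c\cdot(-1)^{\abs{\underline{a}}}$ to hold with the \emph{same} constant $c$ at $\underline{a}=0$ (the $k=1$ tuples, where the inner sum is $+1$ from the empty factorization) as for $\abs{\underline{a}}\ge 1$. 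This is true—with the shifted sign $(-1)^j$ the inner sum equals $(-1)^{\abs{\underline{a}}}$ for every MF $\underline{a}$, the empty one included—but note that Proposition \ref{prop:constD} as printed appears to carry a global sign typo (for $\underline{b}=(1)$ the sum is $+1$, not $(-1)^{\abs{\underline{b}}}=-1$, consistently with $\constD_{(1)}=+\constC_{(1)}$), so a literal citation of its statement at all $\underline{a}\le\hat{\underline{b}}$ would produce an inconsistency at $\underline{a}=0$; your hedged ``global constant'' phrasing plus the EGF check covers this, whereas the paper's pairwise cancellation never evaluates the sums and is immune to the subtlety.
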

\begin{proof}
Every ordered factorization of $D^{\underline{b}}$ into $k$ non-trivial coprime parts where only the last one is NMF induces an ordered factorization of  $D^{\hat{\underline{b}}}$ into either $k-1$ non-trivial coprime parts or $k$  non-trivial coprime parts.
Each factorization of $D^{\hat{\underline{b}}}$ arises in exactly two ways. It follows that:
\[  \sum_{k\ge 0} (-1)^{k+1}  \underset{\sum {\underline{b}_j} = \underline{b}}       {\sum_{(\underline{b}_1,\ldots,\underline{b}_k)}} 1 = 
     \sum_{k\ge 0} (-1)^{k+1}  \underset{\sum {\underline{b}_j} = \hat{\underline{b}}}{\sum_{(\underline{b}_1,\ldots,\underline{b}_k)}} 1 - 
     \sum_{k\ge 0} (-1)^{k+1}  \underset{\sum {\underline{b}_j} = \hat{\underline{b}}}{\sum_{(\underline{b}_1,\ldots,\underline{b}_k)}} 1 = 0\]
which gives the desired result.
\end{proof}
The constants $\constD_{\underline{b}}$ shall be used in Corollaries \ref{cor:secondaryinduction}, \ref{cor:leprim/imprim} and  \ref{cor:main}.

As an example, by considering the different ordered decompositions of $(1,1,1)$, for instance:
\[ (1,1,1) \quad (1,1,0) + (0,0,1)   \quad  (0,0,1) +(1,1,0) \quad   (1,0,1) +(0,1,0)  \quad \ldots \]
including also the $6$ permutations of $ (1,0,0) + (0,1,0) + (0,1,0)$,
we see that:
\[ \constD_{(1,1,1) } = \constC_{(1,1,1)} - 6\constC_{(1,1)}\constC_{(1)} + 6\constC_{(1)}^3 = \frac{1}{8}. \]
We can also compute that:
\[ \constD_{(0)} = -1 \qquad \constD_{(1)} = \constC_{(1)} = \frac{1}{2} \qquad \constD_{(1,1)} = \constC_{(1,1)} -2 \constC_{(1)}^2 = -\frac{1}{4}  \]
\[ \constD_{(2)} = \constC_{(2)} = \frac{1}{12} \qquad \constD_{(2,1)} =  \constC_{(2,1)} -  \constC_{(2)}\constC_{(1)} = 0 \qquad \constD_{(3)} =  \constC_{(3)} = 0. \]
\end{enumerate}
\end{nota}

\begin{prop}\label{prop:LogChernvsChern}
Let $X$ be a smooth projective variety and $\Delta = \cup \{D_i\}$ be a collection of smooth divisors with simple normal crossings on $X$.
We have a relation:
\[ \chern_i(\Omega^1_{X}) = \sum_j (-1)^{i-j}\chern_j(\Omega^1_{X}(\log \Delta))\Delta_{i-j}. \]
Recall $\Delta_k$ is the $k$-th elementary symmetric polynomial in the irreducible components of the boundary of $X$.
This can also be expressed as:
\[ \chern(\Omega^1_{X})  = \chern(\Omega^1_{X}(\log))\prod_{D_i}(1-{D_i}). \]
\end{prop}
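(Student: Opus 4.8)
The plan is to derive both displayed formulas from the residue exact sequence for logarithmic differentials together with the Whitney multiplicativity of the total Chern class. The key geometric input is that, because $\Delta = \cup_i \{D_i\}$ is a collection of smooth divisors with simple normal crossings, the inclusion of regular forms into logarithmic forms fits into a short exact sequence of $\cO_X$-modules
\[ 0 \to \Omega^1_X \to \Omega^1_X(\log \Delta) \to \bigoplus_i \cO_{D_i} \to 0, \]
where the right-hand map is the sum of the residue maps along the $D_i$. I would verify this locally: near a point lying on precisely the components $D_{i_1},\ldots,D_{i_k}$, choose coordinates $z_1,\ldots,z_n$ with $D_{i_s} = \{z_s = 0\}$, so that $\Omega^1_X(\log \Delta)$ is freely generated by $\tfrac{dz_1}{z_1},\ldots,\tfrac{dz_k}{z_k},dz_{k+1},\ldots,dz_n$ while $\Omega^1_X$ is freely generated by $dz_1,\ldots,dz_n$; the residue of $\tfrac{dz_s}{z_s}$ along $D_{i_s}$ is $1$, and the simple normal crossings hypothesis is exactly what guarantees these local pictures glue to the stated sequence.

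Granting the sequence, I would apply the multiplicativity of the total Chern class in short exact sequences (the Whitney formula, valid for coherent sheaves admitting finite locally free resolutions) to obtain
\[ \chern(\Omega^1_X(\log \Delta)) = \chern(\Omega^1_X)\cdot \chern\Big(\bigoplus_i \cO_{D_i}\Big) = \chern(\Omega^1_X)\cdot \prod_i \chern(\cO_{D_i}). \]
It then remains to compute $\chern(\cO_{D_i})$. Resolving the structure sheaf of the divisor by the standard sequence $0 \to \cO_X(-D_i) \to \cO_X \to \cO_{D_i} \to 0$ and using $\chern(\cO_X(-D_i)) = 1 - D_i$ gives $\chern(\cO_{D_i}) = (1-D_i)^{-1}$. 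Substituting and rearranging yields
\[ \chern(\Omega^1_X) = \chern(\Omega^1_X(\log))\prod_i (1-D_i), \]
which is the second displayed formula.

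Finally, to recover the component-wise statement I would expand $\prod_i(1-D_i) = \sum_k (-1)^k \Delta_k$ as in Notation \ref{not:1}, so that the total-class identity reads $\chern(\Omega^1_X) = \chern(\Omega^1_X(\log))\sum_k(-1)^k\Delta_k$; extracting the degree-$i$ graded piece gives $\chern_i(\Omega^1_X) = \sum_j (-1)^{i-j}\chern_j(\Omega^1_X(\log\Delta))\Delta_{i-j}$. I expect the only real obstacle to be the careful justification of the residue exact sequence, namely establishing that the cokernel is exactly $\bigoplus_i \cO_{D_i}$ rather than a more complicated sheaf; this is precisely where the smoothness and simple normal crossings hypotheses enter, and everything after that is formal manipulation of Chern classes.
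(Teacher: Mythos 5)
Your proof is correct and follows essentially the same route as the paper: both rest on the residue exact sequence $0 \to \Omega^1_X \to \Omega^1_X(\log \Delta) \to \oplus_i \cO_{D_i} \to 0$ together with the resolution $0 \to \cO_X(-D_i) \to \cO_X \to \cO_{D_i} \to 0$, then invoke Whitney multiplicativity of the total Chern class. Your added local verification of the residue sequence and the explicit extraction of graded pieces are details the paper leaves implicit, but the argument is the same.
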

\begin{proof}
We follow essentially an argument for an analogous result from \cite[Prop. 1.2]{Tsushima}.
We have the following two exact sequences:
\[ \xymatrix{
0 \ar[r]&  \Omega^1_{X} \ar[r]&\Omega^1_{X}(\log \Delta) \ar[r]& \oplus \cO_{D_i} \ar[r]& 0\\
0 \ar[r]& \cO_{X}(-D_i) \ar[r]& \cO_{X} \ar[r]& \cO_{D_i} \ar[r]& 0. 
}\]
The first of which essentially defines $\Omega^1_{\overline{X}}(\log \Delta)$.

By the multiplicativity of the total Chern class we obtain:
\[ \chern(\Omega_X^1) = \chern(\Omega_X^1(\log \Delta)) \prod_{D_i} (1-D_i). \qedhere\]
\end{proof}

\begin{prop}\label{prop:CHonBDY}
Logarithmic Chern classes restrict to the boundary. 
That is, let $X$ be a smooth projective variety and $\Delta = \cup \{D_i\}$ be a collection of smooth divisors with simple normal crossings on $X$.
Suppose $D \in \Delta$ is a fixed irreducible divisor then:
 \[ \chern^\alpha(\Omega_{X}^1(\log \Delta)) \cdot D = \chern^\alpha(\Omega_{D}^1(\log \Delta')). \]
This equality should be interpreted as an equality on $D$.
\end{prop}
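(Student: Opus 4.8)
The plan is to reduce the statement to the multiplicativity of the total Chern class applied to a suitable residue exact sequence, in exact parallel with the proof of Proposition \ref{prop:LogChernvsChern}. Write $\iota \colon D \hookrightarrow X$ for the inclusion. The product $\chern^\alpha(\Omega^1_X(\log \Delta)) \cdot D$, interpreted as a class on $D$, is the restriction $\iota^* \chern^\alpha(\Omega^1_X(\log \Delta))$; since $\iota^*$ is a ring homomorphism compatible with the formation of Chern classes of restricted bundles, it suffices to prove the single-index identity
\[ \chern_i\bigl(\Omega^1_X(\log \Delta)\bigr)\big|_{D} = \chern_i\bigl(\Omega^1_D(\log \Delta')\bigr) \]
as classes on $D$, for every $i$. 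The case of a general exponent $\alpha$ then follows by forming the product $\prod_i(\,\cdot\,)^{\alpha_i}$ inside the (cohomology or Chow) ring of $D$.

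The key input is the residue exact sequence for logarithmic differentials along the component $D$:
\[ 0 \to \Omega^1_D(\log \Delta') \to \Omega^1_X(\log \Delta)\big|_{D} \xrightarrow{\ \res_D\ } \cO_D \to 0. \]
I would establish this in local coordinates. Near a point of $D$, the simple normal crossings hypothesis provides coordinates $z_1,\ldots,z_n$ with $D = \{z_1 = 0\}$ and with the components of $\Delta$ other than $D$ that meet $D$ cut out by $z_2 = 0, \ldots, z_r = 0$; then $\Omega^1_X(\log \Delta)$ is free on the frame $\tfrac{dz_1}{z_1},\ldots,\tfrac{dz_r}{z_r}, dz_{r+1},\ldots,dz_n$. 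The residue map $\res_D$ sends $\tfrac{dz_1}{z_1}$ to $1$ and the remaining frame elements to $0$, and after restricting to $D$ its kernel is freely generated by $\tfrac{dz_2}{z_2},\ldots,\tfrac{dz_r}{z_r}, dz_{r+1},\ldots,dz_n$, which is precisely a frame for $\Omega^1_D(\log \Delta')$. The transversality built into the notation $\Delta'$ guarantees that the $D_i \cap D$ form a simple normal crossings divisor on $D$, so the target $\Omega^1_D(\log \Delta')$ really is the log cotangent bundle of $D$ relative to $\Delta'$.

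Granting the sequence, the conclusion is formal. The quotient $\cO_D$ is the trivial line bundle, so $\chern(\cO_D) = 1$, and multiplicativity of the total Chern class gives
\[ \chern\bigl(\Omega^1_X(\log \Delta)\big|_{D}\bigr) = \chern\bigl(\Omega^1_D(\log \Delta')\bigr)\cdot \chern(\cO_D) = \chern\bigl(\Omega^1_D(\log \Delta')\bigr). \]
Comparing graded pieces yields the single-index identity above, and multiplying these according to $\alpha$ gives the proposition.

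I expect the main obstacle to be the careful verification of the residue sequence, and in particular the identification of the kernel of $\res_D$ on $D$ with $\Omega^1_D(\log \Delta')$ rather than some larger sheaf. This is exactly where the simple normal crossings and transversality hypotheses enter, since they are what make both the local frame and the restricted divisor $\Delta'$ behave correctly. Everything downstream of the sequence is a routine application of multiplicativity of the total Chern class, just as in Proposition \ref{prop:LogChernvsChern}.
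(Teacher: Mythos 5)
Your proof is correct, but it takes a genuinely different route from the paper's. The paper never invokes a residue sequence: it restricts the identity of Proposition \ref{prop:LogChernvsChern}, namely $\chern(\Omega^1_X(\log\Delta)) = \chern(\Omega^1_X)\frac{1}{1-D}\prod_{D_i\neq D}\frac{1}{1-D_i}$, to $D$, uses the classical conormal sequence fact that $\chern(\Omega^1_X)\frac{1}{1-D}$ restricts to $\chern(\Omega^1_D)$, and then applies Proposition \ref{prop:LogChernvsChern} a second time on $D$ to reassemble $\chern(\Omega^1_D(\log\Delta'))$. Your argument instead works at the level of sheaves via the (standard) residue exact sequence
\[ 0 \to \Omega^1_D(\log\Delta') \to \Omega^1_X(\log\Delta)\big|_D \xrightarrow{\res_D} \cO_D \to 0, \]
which is a strictly stronger input: it identifies the restricted log cotangent bundle up to an extension by the trivial line bundle, and the Chern class identity falls out by multiplicativity. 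The paper's route buys economy (everything is a formal consequence of a proposition already proved, plus the classical adjunction-type restriction), while yours buys a more conceptual and self-contained statement that would survive in settings where one wants the bundle-level relation, not just its Chern classes. One point to tighten: your coordinate computation shows the kernel of $\res_D$ and $\Omega^1_D(\log\Delta')$ have matching local frames, but to conclude they are \emph{isomorphic as sheaves} you should exhibit the natural map rather than compare frames chart by chart; this is easily done by factoring through the restriction map $\Omega^1_X(\log(\Delta - D))\big|_D \to \Omega^1_D(\log\Delta')$ (whose kernel is generated by the image of $dz_1$), whose induced map on the image in $\Omega^1_X(\log\Delta)\big|_D$ is exactly the identification of $\ker(\res_D)$ with $\Omega^1_D(\log\Delta')$. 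With that naturality supplied, everything downstream is, as you say, routine multiplicativity, and the reduction of general $\alpha$ to single Chern classes via the ring homomorphism $\iota^*$ is correct.
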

\begin{proof}
The result is analogous to \cite[Lem. 5.1]{Tsushima}, this proof was suggested by the referee.

By Proposition \ref{prop:LogChernvsChern} we have:
\[ \chern(\Omega_{X}^1(\log \Delta)) = \chern(\Omega_{X}^1) \frac1{(1-D)}\prod_{D_i\neq D} \frac1{(1-D_i)}. \]
As $ \chern(\Omega_{X}^1) \frac1{(1-D)}$ restricts to $ \chern(\Omega_{D}^1)$ on $D$ the right hand side of the above expression restricts to:
\[ \chern(\Omega_{D}^1)\prod_{D_i\neq D} \frac1{(1-D_i')} \]
which in turn equals $  \chern(\Omega_{D}^1(\log \Delta')) $ by Proposition \ref{prop:LogChernvsChern}.
As the Chern classes agree, so to do their products.
\end{proof}

\begin{nota}
Consider $\pi: X \rightarrow Y$ a ramified covering.
For $Z \subset X$ irreducible we shall denote by $e_Z$ the ramification degree of $\pi$ at $Z$ as it is defined in \cite[Ex. 4.3.4]{FultonIntersection1}.

We note that in the context of smooth varieties by \cite[Prop. 7.1]{FultonIntersection1} we can compute the ramification degree as:
\[ e_Z = {\rm length}(\cO_{X,Z} \otimes_{\cO_Y} \cO_{Y,\pi(Z)}/J_{\pi(Z)}). \]
In the expression above, $J_{\pi(Z)}$ is the ideal associated to $\pi(Z)$ and the length is that of the ring as a  module over itself.
\end{nota}

The following  proposition is well known (see for example \cite[Ex. 4.3.7]{FultonIntersection1}), though we will not make direct use of it the statement motivates our understanding of ramification.
\begin{prop}\label{prop:sumramdeg}
Let $X$ and $Y$ be smooth projective varieties.
Consider $\pi: X \rightarrow Y$ a potentially ramified finite covering of degree $\mu$.
For any $Z' \subset Y$ irreducible, if we decompose $\pi^{-1}(Z') = \cup_i Z_i$ into irreducible components then:
\[ \sum_i \mu_{Z_i}e_{Z_i} = \mu. \]
where $\mu_{Z_i}$ is the degree of $\pi|_{Z_i}$.
\end{prop}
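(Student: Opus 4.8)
The plan is to reduce the statement to a local computation at the generic point of $Z'$ and then exploit the flatness of $\pi$. First I would set $A = \cO_{Y,Z'}$, the local ring of $Y$ at the generic point of $Z'$; its maximal ideal is precisely $J_{Z'}$ and its residue field is the function field $K(Z')$. Since $\pi$ is finite, hence affine, the pushforward $\pi_*\cO_X$ is a coherent $\cO_Y$-module, and I would take $B = (\pi_*\cO_X)_{Z'}$, its stalk at the generic point of $Z'$, which is a finite $A$-algebra. The primes $\fp_i$ of $B$ lying over $J_{Z'}$ correspond exactly to the generic points of the irreducible components $Z_i$ of $\pi^{-1}(Z')$, so that $\cO_{X,Z_i} = B_{\fp_i}$ and $\kappa(\fp_i) = K(Z_i)$.

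The crucial input is flatness. I would invoke ``miracle flatness'': because $X$ is smooth (hence Cohen--Macaulay), $Y$ is smooth (hence regular), $\dim X = \dim Y$, and the fibres of the finite map $\pi$ are zero-dimensional, the morphism $\pi$ is flat. Consequently $B$ is a free $A$-module, and its rank is the generic degree $\mu = \dim_{K(Y)}\bigl(B\otimes_A K(Y)\bigr)$. Freeness is exactly what forces the special fibre to have the same dimension as the generic one:
\[ \dim_{K(Z')}\bigl(B/J_{Z'}B\bigr) = \mu. \]

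Next I would decompose the left-hand side. The ring $B/J_{Z'}B$ is a finite-dimensional $K(Z')$-algebra, hence Artinian, and so it splits as the product of its localizations $\bigl(B/J_{Z'}B\bigr)_{\fp_i}$ at the primes $\fp_i$ corresponding to the $Z_i$. Matching with the definition
\[ e_{Z_i} = {\rm length}\bigl(\cO_{X,Z_i}\otimes_{\cO_Y}\cO_{Y,Z'}/J_{Z'}\bigr) \]
identifies $e_{Z_i}$ with the length of the local Artinian ring $\bigl(B/J_{Z'}B\bigr)_{\fp_i}$ over itself, whose residue field is $K(Z_i)$ with $[K(Z_i):K(Z')] = \mu_{Z_i}$. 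The standard relation between length over a ring and dimension over the residue field of the base then gives
\[ \dim_{K(Z')}\bigl(B/J_{Z'}B\bigr)_{\fp_i} = e_{Z_i}\cdot\mu_{Z_i}, \]
and summing over $i$ yields $\sum_i \mu_{Z_i} e_{Z_i} = \mu$, as desired.

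The main obstacle is the flatness step: without it the equality $\dim_{K(Z')}\bigl(B/J_{Z'}B\bigr)=\mu$ can fail, since the special fibre of a non-flat finite map can jump in dimension and the generic rank need not be preserved. This is precisely where the smoothness hypotheses on $X$ and $Y$ are indispensable; everything else is the structure theory of Artinian rings together with the bookkeeping of residue-field degrees.
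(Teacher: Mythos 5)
Your proof is correct, but note that the paper does not actually prove this proposition: it is stated as ``well known'' with a pointer to \cite[Ex.~4.3.7]{FultonIntersection1} and is never used directly, so there is no internal argument to compare against. Your argument is the standard one underlying that exercise, and it is essentially complete: miracle flatness ($X$ Cohen--Macaulay because smooth, $Y$ regular, $\dim X = \dim Y$ since the covering is finite and dominant, zero-dimensional fibres) makes $B=(\pi_*\cO_X)_{Z'}$ finite flat, hence free of rank $\mu$ over the local ring $A=\cO_{Y,Z'}$, and the length--degree bookkeeping in the Artinian quotient $B/J_{Z'}B$ is right, since every simple subquotient of the local factor $(B/J_{Z'}B)_{\fp_i}$ is isomorphic to $K(Z_i)$, giving $\dim_{K(Z')}(B/J_{Z'}B)_{\fp_i}=e_{Z_i}\,\mu_{Z_i}$. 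One point you should make explicit, and in the right order: a priori the primes of $B$ lying over $J_{Z'}$ correspond only to those components $Z_i$ that \emph{dominate} $Z'$; a component of $\pi^{-1}(Z')$ mapping onto a proper subvariety of $Z'$ would be invisible at the generic point and would break your claimed correspondence. It is again flatness that excludes this: once $\pi$ is flat, the structure sheaf of $X\times_Y Z'$ is flat, hence torsion-free, over the domain $\cO_{Z'}$, so every associated (in particular every minimal) prime lies over the generic point of $Z'$ and all components dominate. So establish flatness first, then assert the bijection between the $\fp_i$ and the $Z_i$. Your closing remark about where smoothness enters is consonant with the paper itself, which notes (via \cite[Prop.~7.1]{FultonIntersection1}) that the formula $e_Z={\rm length}(\cO_{X,Z}\otimes_{\cO_Y}\cO_{Y,\pi(Z)}/J_{\pi(Z)})$ for the ramification degree is available precisely in this smooth, hence flat, context.
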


\begin{nota}\label{not:ERA}
Fix a ramified covering $\pi : X \rightarrow Y$ of smooth projective varieties of dimension $n$.

The collection of reduced irreducible components of the branch locus shall be denoted $\Delta(B)$, and we shall denote monomial exponents for the branch locus by $\underline{b}$ and write $B^{\underline{b}}$ for the associated equivalence class of cycle.

The collection of reduced irreducible components of the ramification locus shall be denoted $\Delta(R)$, and we shall  denote monomial exponents for the ramification locus by $\underline{a}$ and write $R^{\underline{a}}$ for the associated equivalence class of cycle.
Recall that  $\Delta(R) = \pi^{-1}(\Delta(B))$ includes all components $R_j$ in $\pi^{-1}(B_i)$ even those which may not themselves be ramified. 

For an irreducible component $R_i$ then it is clear $\pi(R_i) = B_j$ for a unique $j$.
Given a pair of monomial exponents $\underline{a}$ and $\underline{b}$ we shall say $\pi(\underline{a}) = \underline{b}$ if for each $j$ we have:
\[ b_j = \sum_{\pi(R_i) = B_j} a_i . \]
We shall denote by:
\[ E_{R^{\underline{a}}} =  \prod_i (e_{R_i})^{a_i}  \]
the product of the ramification degrees. This notation is justified by Lemma \ref{lem:cute2} which says that when $\underline{a}$ is MF then $E_{R^{\underline{a}}}$ is the ramification degree of each irreducible component of $R^{\underline{a}}$.
\end{nota}

\begin{lemma}\label{lem:cute1}
Consider $\pi: X \rightarrow Y$ a potentially ramified finite map between smooth projective varieties.
Suppose $D_1$ and $D_2$ are two (reduced irreducible) divisors on $X$ which meet with simple normal crossings and that $\pi(D_1) = \pi(D_2) = D$ is smooth.
Let $Z$ be a (reduced) irreducible component of $D_1\cap D_2$.
Then there is a component $R$ of the ramification locus of $\pi$ such that $Z \subset R$ and $Z \not\in \{ D_1,D_2 \}$.
In particular the collection $ \{ D_1,D_2, R \}$ does not have simple normal crossings.
\end{lemma}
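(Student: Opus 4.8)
The plan is to reduce everything to the generic point $z$ of the codimension-two stratum $Z$ and to show that $\pi$ must ramify there. Since the ramification locus $R = \pi^{-1}(B)$ is closed, once I know that $z$ lies in $R$ I can conclude $Z = \overline{\{z\}} \subseteq R$, and then take for $R$ any irreducible component of the ramification locus containing $Z$ (such a component exists because $Z$ is irreducible). So the whole content is the local statement that $\pi$ fails to be \'etale at $z$.

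To prove that, I would argue by contradiction. Write $w = \pi(z) \in D$ and suppose $\pi$ is \'etale at $z$. Since $D_1$ and $D_2$ both map onto $D$, set-theoretically both are contained in the scheme-theoretic preimage $\pi^{-1}(D) = X \times_Y D$, and the projection $\pi^{-1}(D) \to D$ is the base change of $\pi$ along $D \hookrightarrow Y$. Base change of an \'etale morphism is \'etale, so $\pi^{-1}(D) \to D$ is \'etale at $z$; as $D$ is smooth and a scheme \'etale over a smooth variety is itself smooth, $\pi^{-1}(D)$ is smooth, hence regular, at $z$. A regular local ring is a domain, so $\pi^{-1}(D)$ has a single irreducible component through $z$. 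But $z \in Z \subseteq D_1 \cap D_2$ with $D_1 \ne D_2$, so $\pi^{-1}(D)$ has the two distinct components $D_1, D_2$ passing through $z$, and because they cross with simple normal crossings it is in fact nodal there. This contradiction forces $\pi$ to be ramified at $z$, whence $w \in B$ and $z \in \pi^{-1}(B) = R$.

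The remaining assertions are bookkeeping. The simple normal crossings hypothesis on $D_1, D_2$ gives $\dim Z = n - 2$, whereas $\dim D_i = n - 1$, so $Z \notin \{D_1, D_2\}$. Finally, for the collection $\{D_1, D_2, R\}$: if $R$ equals $D_1$ or $D_2$, then the collection repeats a divisor and so is not a simple normal crossings collection of distinct reduced components; and if $R$ is distinct from both, then at the generic point of $Z$ all three divisors pass through, so by simple normal crossings their local equations would extend to a regular system of parameters and $D_1 \cap D_2 \cap R$ would have codimension three there, contradicting $Z \subseteq D_1 \cap D_2 \cap R$ of codimension two. Either way $\{D_1, D_2, R\}$ does not have simple normal crossings.

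The main obstacle is the local step in the second paragraph: one must make sure that \emph{\'etale at $z$} really propagates smoothness of $D$ to smoothness of the full preimage $\pi^{-1}(D)$ at $z$ (for which the scheme-theoretic fibre product, not merely the set-theoretic preimage, is what matters), and that regularity then rules out two crossing components. It is cleanest to state at the outset that we work over an algebraically closed base so that residue fields cause no trouble; the crucial input, that \'etale morphisms pull back smooth subschemes to smooth subschemes, is characteristic-free, so no further care is needed.
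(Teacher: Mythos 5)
Your reduction to the generic point $z$ of $Z$ and the \'etale-base-change argument are fine as far as they go: they correctly show that $\pi$ is not \'etale at $z$, hence that $Z$ is contained in \emph{some} component $R$ of the ramification locus. But that is strictly weaker than what the lemma needs. The stated conclusion ``$Z \not\in \{D_1,D_2\}$'' is evidently a typo for ``$R \not\in \{D_1,D_2\}$'' --- the point, as both the ``in particular'' clause and the application in Lemma \ref{lem:cute3}(1) make clear, is that there is a \emph{third} ramification divisor through $Z$, so that three distinct divisors meet along a codimension-two locus and simple normal crossings fails for the full collection $\Delta(R)$. Your argument cannot deliver this: if, say, $\pi^\ast t_1 = u\,s_1^{a_1} s_2^{a_2}$ with $a_1 \ge 2$, then $D_1$ is itself a ramification component containing $Z$, and non-\'etaleness at $z$ is already witnessed by $R = D_1$; your contradiction then produces nothing new. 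Your fallback for the case $R \in \{D_1,D_2\}$ (``the collection repeats a divisor'') does not salvage the conclusion in any usable form: as a set, $\{D_1,D_2,D_1\} = \{D_1,D_2\}$, which has simple normal crossings by hypothesis, and the downstream use in Lemma \ref{lem:cute3} needs a genuine failure of SNC for the collection of distinct reduced components of $\pi^{-1}(B)$. Your trivial dimension count showing $Z \neq D_1, D_2$ addresses only the literal (mis)reading, not the content.

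The paper closes exactly this gap with a finer local computation that your qualitative argument skips. In completed local rings at $z$, write $\pi^\ast t_1 = u\,s_1^{a_1} s_2^{a_2}$ with $a_1, a_2 \ge 1$ (this uses $\pi(D_1)=\pi(D_2)=D$, which forces \emph{both} exponents to be positive) and $\pi^\ast t_2 = v_1 s_1^{b_1} + v_2 s_2^{b_2} + \cdots$, where finiteness forces $s_1, s_2 \nmid \pi^\ast t_2$ and vanishing at $z$ kills the constant term. The Jacobian then factors as
\[ u\,s_1^{a_1-1} s_2^{a_2-1}\bigl( a_2 b_1 v_1 s_1^{b_1} + a_1 b_2 v_2 s_2^{b_2} + \cdots \bigr), \]
and the last factor vanishes at $z$ but is divisible by neither $s_1$ nor $s_2$; its zero locus therefore contributes a ramification component through $Z$ distinct from $D_1$ and $D_2$. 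That explicit factorization of the Jacobian, stripping off the possible ramification along $D_1$ and $D_2$ themselves, is the essential step; no smoothness/base-change argument at the level of ``\'etale or not at $z$'' can see it.
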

\begin{proof}
We will consider the completed local rings at $Z$ and $\pi(Z)$. By the Cohen structure theorem (for regular complete local rings, see \cite[Tag 0323 Lemma 10.154.10]{stacks-project}) these are power series rings over the coordinate ring, we can thus write these in the form:
\[ K(Z)[[s_1,s_2]] \quad \text{and}\quad K( \pi(Z))[[t_1,t_2]] \]
where $s_i$ is the local coordinate defining $D_i$ on $X$ near $Z$, the coordinate $t_1$ defines $D$ on $Y$ near $\pi(Z)$ and $t_2$ is any other local coordinate defining a divisor which meets $D$ transversely at $\pi(Z)$.

By the assumption that $\pi(D_1)=\pi(D_2)=D$ we have that we can choose our coordinates $s_1$ and $s_2$ so that $\pi^\ast(t_1) = us_1^{a_1}s_2^{a_2}$ with $a_1,a_2\ge1$ and $u\in K(Z)^\times$.
By the assumption that the map is finite we have that $s_1,s_2\not\vert \pi^\ast(t_2)$, moreover $\pi^\ast(t_2)$ vanishes at $Z$ and thus  $\pi^\ast(t_2)$ has trivial constant term.
It follows that 
\[ \pi^\ast(t_2) = v_1s_1^{b_1} + v_2s_2^{b_2} + (\text{other terms (not including those monomials)})\]
with $b_1,b_2 \ge 1$ and $v_1,v_2 \in  K(Z)^\times$.

We can understand the ramification locus near $Z$ by way of the Jacobian condition.
The Jacobian is precisely:
\[ a_1us_1^{a_1-1}s_2^{a_2} \left( b_2v_2s_2^{b_2-1}+  \frac{\del (\text{other terms})}{\del s_2}\right) + a_2us_1^{a_1}s_2^{a_2-1} \left( b_1v_1s_1^{b_1-1}  + \frac{\del (\text{other terms})}{\del s_1} \right). \]
As the expressions $s_2 \frac{\del (s_1^{\ell_1}s_2^{\ell_2})}{\del s_2}$ and $s_1 \frac{\del (s_1^{\ell_1}s_2^{\ell_2})}{\del s_1}$ both have the same monomial type, namely $s_1^{\ell_1}s_2^{\ell_2}$, as the starting monomial we find  that we may rewrite the Jacobian above as:
\[ us_1^{a_1-1}s_2^{a_2-1}( a_2b_1v_1s_1^{b_1}  + a_1b_2v_2s_2^{b_2} + (\text{other terms (not including those monomials)})). \]
The term $(a_2b_1v_1s_1^{b_1}+  a_1b_2v_2s_2^{b_2}   + (\text{other terms}))$ vanishes at $Z$ and is not divisible by $s_1$ or $s_2$ and thus defines at least one component of the ramification locus that pass through $Z$ which is not equal to $D_1$ or $D_2$.
\end{proof}

\begin{lemma}\label{lem:cute2}
Consider $\pi: X \rightarrow Y$ a potentially ramified finite map between smooth projective varieties. 
Let $\Delta(B)$ be the collection of irreducible components of the branch locus (on $Y$) and $\Delta(R) = \pi^{-1}(\Delta(B))$ be the collection of (reduced) irreducible components of the ramification locus (on $X$). Suppose $\Delta(B)$ and  $\Delta(R)$ have simple normal crossings.
If $R_1,\ldots R_\ell \in \Delta(R)$ are distinct and if $Z$ is a (reduced) irreducible component of $\cap_i R_i$ then:
\[ e_Z = \prod_i e_{R_i}. \]
\end{lemma}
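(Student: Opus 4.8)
The plan is to compute $e_Z$ directly from its definition as a length, pass to completed local rings in order to pin down coordinates, and reduce to the split situation $\pi^\ast t_i = u_i s_i^{e_{R_i}}$, where the length becomes a trivial monomial count.

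First I would extract the key consequence of Lemma \ref{lem:cute1}: no two distinct ramification components through $Z$ can map to the same branch component. Indeed, if $R, R' \in \Delta(R)$ are distinct, both pass through $Z$, and $\pi(R) = \pi(R') = B$, then $B$ is smooth (being a component of the simple normal crossings divisor $\Delta(B)$) and $R, R'$ meet with simple normal crossings, so Lemma \ref{lem:cute1} produces a further ramification component $R'' \neq R, R'$ through $Z$ with $\{R, R', R''\}$ failing to have simple normal crossings, contradicting the hypothesis on $\Delta(R)$. Since $\Delta(R) = \pi^{-1}(\Delta(B))$, every component of $\pi^{-1}(B_i)$ lies in $\Delta(R)$; hence, writing $B_i := \pi(R_i)$, the $B_i$ are pairwise distinct and, near $Z$, the reduction of $\pi^{-1}(B_i)$ is exactly $R_i$.

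Next I would set up coordinates. Because $X, Y$ are smooth and $Z$ and $W := \pi(Z)$ are components of transverse intersections of $\ell$ simple normal crossings divisors, both have codimension $\ell$ with regular local rings; the Cohen structure theorem (as used in the proof of Lemma \ref{lem:cute1}) gives
\[ \widehat{\cO_{X,Z}} = K(Z)[[s_1,\ldots,s_\ell]], \qquad \widehat{\cO_{Y,W}} = K(W)[[t_1,\ldots,t_\ell]], \]
where $s_i$ cuts out $R_i$, $t_i$ cuts out $B_i$, and each tuple is a regular system of parameters. Since $J_W$ localizes to the maximal ideal $(t_1,\ldots,t_\ell)$ of $\cO_{Y,W}$, the definition of $e_Z$ identifies it with the length of $\cO_{X,Z}/(\pi^\ast t_1,\ldots,\pi^\ast t_\ell)$; as this quotient is Artinian its length is unchanged under completion, so it suffices to compute $\mathrm{length}\,K(Z)[[s_1,\ldots,s_\ell]]/(\pi^\ast t_1,\ldots,\pi^\ast t_\ell)$.

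The crux, and the step I expect to be the main obstacle, is to show $\pi^\ast t_i = u_i s_i^{e_{R_i}}$ for a unit $u_i$. By the first paragraph the zero set of $\pi^\ast t_i$ near $Z$ is exactly $V(s_i)$; since $K(Z)[[s_1,\ldots,s_\ell]]$ is a unique factorization domain and $s_i$ is a height-one prime, every irreducible factor of $\pi^\ast t_i$ must be an associate of $s_i$, forcing $\pi^\ast t_i = u_i s_i^{a_i}$. Localizing at the generic point of $R_i$, a discrete valuation ring with uniformizer $s_i$, identifies the exponent $a_i$ with the ramification degree $e_{R_i}$. Discarding the units $u_i$, the quotient is $K(Z)[[s_1,\ldots,s_\ell]]/(s_1^{a_1},\ldots,s_\ell^{a_\ell})$, which has $K(Z)$-basis the monomials $s_1^{c_1}\cdots s_\ell^{c_\ell}$ with $0 \le c_i < a_i$, hence length $\prod_i a_i = \prod_i e_{R_i}$. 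It is precisely the passage to the clean monomial form $\pi^\ast t_i = u_i s_i^{e_{R_i}}$ where the injectivity supplied by Lemma \ref{lem:cute1} is indispensable; everything else is bookkeeping with lengths and completions.
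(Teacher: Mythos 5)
Your proof is correct and follows essentially the same route as the paper's: both pass to the completed local rings $K(Z)[[s_1,\ldots,s_\ell]]$ and $K(\pi(Z))[[t_1,\ldots,t_\ell]]$, invoke Lemma \ref{lem:cute1} to see the $\pi(R_i)$ are distinct, reduce to the monomial form $\pi^\ast t_i = u_i s_i^{a_i}$, and conclude by a length count. The only difference is that you supply details the paper leaves implicit, namely the UFD/height-one-prime argument forcing $\pi^\ast t_i = u_i s_i^{a_i}$, the DVR localization identifying $a_i = e_{R_i}$, and the invariance of the length under completion.
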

\begin{proof}
We will consider the completed local ring at $Z$ and $\pi(Z)$.
The completed local rings at the generic points are of the form:
\[ K(Z)[[s_1,\ldots,s_{\ell}]] \quad \text{and}\quad K( \pi(Z))[[t_1,\ldots,t_{\ell}]] \]
where $s_i$ is a local parameter defining $R_i$, and $t_i$ a local parameter defining $B_i = \pi(R_i)$. That $\pi(R_i)$ are all distinct follows from  Lemma \ref{lem:cute1}.
It follows from this setup that we may choose the local coordinate $s_i$ so that $\pi^\ast(t_i) = u_is_i^{a_i}$  with $a_i \ge 1$ and $u_i\in K(Z)^\times$.
The claim now follows from a direct computations of lengths.
In particular $e_{R_i} = a_i$ and $e_Z = \prod_i a_i$.
\end{proof}

\begin{lemma}\label{lem:cute3}
Consider $\pi: X \rightarrow Y$ a potentially ramified finite map between smooth projective varieties. 
Let $\Delta(B)$ be the collection of irreducible components of the branch locus (on $Y$) and $\Delta(R) = \pi^{-1}(\Delta(B))$ be the collection of (reduced) irreducible components of the ramification locus (on $X$). Suppose $\Delta(B)$ and  $\Delta(R)$ have simple normal crossings.
\begin{enumerate}
\item If $\pi(\underline{a}) = \underline{b}$, and the monomial type of $\underline{a}$ and $\underline{b}$ are not the same then:
\[ R^{\underline{a}} = 0. \]

\item If $\pi(\underline{a}) = \underline{b}$, and the monomial type of $\underline{a}$ and $\underline{b}$ are the same then in the formal expansion:
\[  \pi^\ast(B^{\underline{b}}) = \prod_i \pi^{\ast}(B_i)^{b_i} = \prod_i \left( \sum_{\pi(R_j) = B_i} e_{R_j}R_j \right)^{b_i} = \sum_{\pi(\underline{a})=\underline{b}} x_{\underline{a}}R^{\underline{a}} \]
the coefficient $x_{\underline{a}}$ of $R^{\underline{b}}$ is $E_{R^{\underline{a}}}$.

\item We have the following identity in the Chow ring:
 \[ \pi^\ast(B^{\underline{b}}) = \sum_{\pi(\underline{a}) = \underline{b}} E_{R^{\underline{a}}} R^{\underline{a}}. \]
\end{enumerate}
\end{lemma}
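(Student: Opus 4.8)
The plan is to reduce all three parts to a single geometric input, namely that distinct ramification components lying over the same branch component are disjoint. Concretely, I would first establish the auxiliary claim: if $R_{j_1} \neq R_{j_2}$ are components of $\Delta(R)$ with $\pi(R_{j_1}) = \pi(R_{j_2})$, then $R_{j_1} \cap R_{j_2} = \emptyset$, so that $R_{j_1} \cdot R_{j_2} = 0$ in the Chow ring. This is the contrapositive of Lemma \ref{lem:cute1}: the common image $B_i$ is smooth (being a component of the simple-normal-crossings collection $\Delta(B)$) and $R_{j_1}, R_{j_2}$ meet transversely (being components of $\Delta(R)$), so were their intersection nonempty, any irreducible component $Z$ of it would lie in a third ramification component $R \notin \{R_{j_1}, R_{j_2}\}$, and then $\{R_{j_1}, R_{j_2}, R\} \subset \Delta(R)$ would fail to have simple normal crossings, contradicting the hypothesis. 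Since disjoint subvarieties of a smooth variety have vanishing intersection product, the claim follows.

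For part (1) I would translate the phrase ``the monomial types of $\underline{a}$ and $\underline{b}$ differ'' into the statement that the assignment $\{j : a_j \neq 0\} \to \{i : b_i \neq 0\}$ sending $R_j$ to the index $i$ with $\pi(R_j) = B_i$ is not injective. Whenever $\pi(\underline{a}) = \underline{b}$ this map is well defined and surjective (each nonzero $a_j$ forces $b_{\pi(R_j)} \geq a_j > 0$, and each nonzero $b_i = \sum_{\pi(R_j) = B_i} a_j$ forces some nonzero $a_j$ above it), so the two monomial types agree exactly when it is a bijection. Non-injectivity therefore produces distinct $R_{j_1}, R_{j_2}$ over a common $B_i$ with $a_{j_1}, a_{j_2} \geq 1$; then $R^{\underline{a}}$ is divisible by $R_{j_1} \cdot R_{j_2} = 0$, whence $R^{\underline{a}} = 0$.

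For part (2) I would start from $\pi^\ast(B_i) = \sum_{\pi(R_j) = B_i} e_{R_j} R_j$, which records the ramification indices as the orders of vanishing of a local equation of $B_i$ along the $R_j$, exactly as in the computation $\pi^\ast(t_i) = u_i s_i^{e_{R_j}}$ appearing in the proof of Lemma \ref{lem:cute2}. Expanding $\prod_i \bigl(\sum_{\pi(R_j) = B_i} e_{R_j} R_j\bigr)^{b_i}$ by the multinomial theorem in the formal polynomial ring on the variables $R_j$, and using that each variable $R_j$ occurs only in the single factor indexed by $i = \pi(R_j)$, the coefficient of a monomial $R^{\underline{a}}$ with $\pi(\underline{a}) = \underline{b}$ is $\bigl(\prod_i \binom{b_i}{(a_j)_{\pi(R_j) = B_i}}\bigr) E_{R^{\underline{a}}}$. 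When $\underline{a}$ has the same monomial type as $\underline{b}$, each $b_i$ is concentrated on a single $a_j$, every multinomial coefficient equals $1$, and the coefficient is exactly $E_{R^{\underline{a}}}$.

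Part (3) then combines the previous two. The formal expansion gives $\pi^\ast(B^{\underline{b}}) = \sum_{\pi(\underline{a}) = \underline{b}} x_{\underline{a}} R^{\underline{a}}$. By part (1) every term with $\underline{a}$ of type different from $\underline{b}$ has $R^{\underline{a}} = 0$ and drops out regardless of its coefficient, while by part (2) each surviving term carries coefficient $E_{R^{\underline{a}}}$. Since the dropped terms contribute zero, I may harmlessly replace each coefficient by $E_{R^{\underline{a}}}$ and index the sum over all $\underline{a}$ with $\pi(\underline{a}) = \underline{b}$, yielding the stated Chow-ring identity. The only genuine obstacle is the disjointness input; once Lemma \ref{lem:cute1} supplies it, everything else is multinomial bookkeeping together with the translation between equality of monomial types and injectivity of the image map.
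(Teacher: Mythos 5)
Your proposal is correct and takes essentially the same route as the paper: part (1) is deduced as the contrapositive of Lemma \ref{lem:cute1} (two components over a common $B_i$ meeting nontrivially would force a third ramification component through their intersection, violating simple normal crossings, hence $R^{\underline{a}}=0$), part (2) is the same multinomial-coefficient bookkeeping the paper dismisses as a straightforward check, and part (3) combines the two exactly as the paper does, replacing coefficients on vanishing terms harmlessly. Your write-up simply makes explicit two steps the paper leaves implicit --- that disjointness kills the Chow-ring product, and that equality of monomial types is equivalent to bijectivity of the support map --- both of which you verify correctly.
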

\begin{proof}
The first statement follows immediately from Lemma \ref{lem:cute1}, in particular if the monomial exponents are not the same then the expression $R^{\underline{a}}$ involves intersecting two components which map to the same $B_i$. If these two components do not have trivial intersection than the ramification locus does not have simple normal crossings.

The second statement is a straightforward check and indeed is a basic property of multinomial coefficients.

The third statement then combines the previous two by observing that $R^{\underline{a}} = 0$ whenever the coefficient of $R^{\underline{a}}$ is not $E_{R^{\underline{a}}}$.
\end{proof}

\begin{thm}\label{thm:pullbacklog}
Logarithmic Chern classes respect pullbacks through ramified covers.
That is, let $X$ and $Y$ be smooth projective varieties of dimension $n$.
Consider $\pi: X \rightarrow Y$ a potentially ramified finite covering.
Let $\Delta(B)$ be the collection of irreducible components of the branch locus (on $Y$) and $\Delta(R) = \pi^{-1}(\Delta(B))$ be the collection of (reduced) irreducible components of the ramification locus (on $X$). 
Suppose that $\Delta(R)$ and $\Delta(B)$ consist of simple normal crossing divisors.
Then:
\[ \pi^\ast( \Omega_Y(\log \Delta(B)) =  \Omega_X(\log \Delta(R))). \]
Recall that $\Delta(R)$ includes even those irreducible components of $\pi^{-1}(B)$ which are not themselves ramified.
\end{thm}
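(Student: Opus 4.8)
The plan is to exhibit a natural morphism of vector bundles on $X$ and prove it is an isomorphism by a codimension-one argument. Both $\pi^\ast\Omega^1_Y(\log\Delta(B))$ and $\Omega^1_X(\log\Delta(R))$ are locally free of rank $n=\dim X$. The ordinary pullback of differentials is an $\cO_X$-linear map $d\pi\colon \pi^\ast\Omega^1_Y\to\Omega^1_X$, and the first step is to extend it to
\[ \phi\colon \pi^\ast\Omega^1_Y(\log\Delta(B))\longrightarrow \Omega^1_X(\log\Delta(R)). \]
Regarding both logarithmic bundles as subsheaves of the sheaves of rational differentials, it suffices to check that $d\pi$ sends a logarithmic generator to a rational form whose only poles are simple poles along $\Delta(R)$. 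Since $\pi^\ast$ can only introduce poles along $\pi^{-1}(B)=R$, this is local at the generic point of each $R_i$, and there I invoke the normal form of Lemma \ref{lem:cute2}: writing $s$ and $t$ for local parameters cutting out $R_i$ and $B_i=\pi(R_i)$, we have $\pi^\ast(t)=us^{e_{R_i}}$ with $u$ a unit, whence
\[ \pi^\ast\!\left(\frac{dt}{t}\right)=e_{R_i}\,\frac{ds}{s}+\frac{du}{u}. \]
Because $u$ is a unit, $\tfrac{du}{u}$ is regular, so the right-hand side is a section of $\Omega^1_X(\log\Delta(R))$; this makes $\phi$ well defined and shows that $\phi(\tfrac{dt}{t})$ has residue $e_{R_i}$ along $R_i$.

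A morphism of rank-$n$ vector bundles on the smooth, hence normal, variety $X$ is an isomorphism precisely when its determinant is a nowhere-vanishing section of $\cHom(\det\pi^\ast\Omega^1_Y(\log\Delta(B)),\det\Omega^1_X(\log\Delta(R)))$. As the zero locus of a nonzero section of a line bundle is an effective divisor, it is enough to verify that $\phi$ is an isomorphism at the generic point of $X$ and at the generic point of every prime divisor. At the generic point of $X$ there are no boundary components and $\phi=d\pi$, which is an isomorphism since $\pi$ is generically \'etale (we work in characteristic zero). At the generic point of a prime divisor not among the $R_i$, that divisor lies outside $R=\pi^{-1}(B)$, so $\pi$ is unramified there---the ramification divisor is contained in $\pi^{-1}(B)$---and again $\phi=d\pi$ is an isomorphism.

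It remains to treat the generic point of $R_i$. Take as a basis of $\pi^\ast\Omega^1_Y(\log\Delta(B))$ the form $\tfrac{dt}{t}$ together with the pullbacks of a basis of $\Omega^1_{B_i}$ at the generic point of $B_i$, and as a basis of $\Omega^1_X(\log\Delta(R))$ the form $\tfrac{ds}{s}$ together with a basis of $\Omega^1_{R_i}$. By the computation above the log generator maps to $e_{R_i}\tfrac{ds}{s}$ plus a regular form, while the regular generators map, via $d(\pi|_{R_i})$, into $\Omega^1_{R_i}$ with no $\tfrac{ds}{s}$-component; hence the matrix of $\phi$ is block triangular with determinant $e_{R_i}\cdot\det\big(d(\pi|_{R_i})\big)$. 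Since $\pi|_{R_i}\colon R_i\to B_i$ is a dominant, generically separable, finite map, $d(\pi|_{R_i})$ is an isomorphism at the generic point, so $\det\phi=e_{R_i}\cdot(\text{unit})\neq 0$. Thus $\phi$ is an isomorphism at every codimension-one point and therefore globally.

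The main obstacle is the local analysis along the boundary: one must simultaneously control that pulling back a logarithmic form produces no spurious poles, that the pole it does produce is simple, and that the transverse ramification is recorded exactly by the single factor $e_{R_i}$. All three facts come from the separation-of-variables normal form $\pi^\ast(t)=us^{e_{R_i}}$, which itself depends on Lemma \ref{lem:cute1}, ensuring that distinct ramification components over one branch divisor cannot meet and so cannot contaminate each other's local equations. The secondary input---generic separability of $\pi$ and of each $\pi|_{R_i}$---is automatic in characteristic zero; in positive characteristic the argument would require a tameness hypothesis.
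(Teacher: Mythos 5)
Your proof is correct, but it takes a genuinely different route from the paper's. The paper argues pointwise by local coordinates: choosing coordinates $y_1,\ldots,y_n$ on $Y$ and $x_1,\ldots,x_n$ on $X$ with $\pi^\ast(y_i)=x_i^{a_i}$ for the boundary coordinates (appealing to the normal form from the proof of Lemma \ref{lem:cute2}), it checks directly that the pullback of the standard logarithmic frame $dy_i/y_i^{\epsilon_i}$ is, up to the constants $a_i$, the standard logarithmic frame $dx_i/x_i^{\epsilon_i}$, so the two subsheaves of rational differentials coincide at every point at once. You instead build the comparison morphism $\phi$ once and reduce both its well-definedness and its bijectivity to codimension one: well-definedness because membership in a locally free subsheaf of the rational forms on a normal variety may be tested at codimension-one points, and bijectivity via the determinant-line argument. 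What your route buys is that you only ever need the one-parameter normal form $\pi^\ast(t)=us^{e_{R_i}}$ at the generic point of a single $R_i$, where it follows from the structure of the completed discrete valuation ring; the paper's proof invokes the simultaneous normal form in all boundary coordinates at arbitrary points of deep strata, which Lemma \ref{lem:cute2} literally establishes only at generic points of strata, so your argument is in this respect the more watertight one. You also make explicit where characteristic hypotheses enter (generic \'etaleness, invertibility of the $e_{R_i}$, i.e.\ tameness), which the paper leaves implicit in the step $d(x^a)/x^a=a\,dx/x$. What the paper's route buys is brevity and an explicit frame-to-frame correspondence. Three minor points to tighten: first, ``only simple poles along $\Delta(R)$'' is strictly weaker than membership in the logarithmic subsheaf (the form $dx_2/x_1$ has a simple pole along $\{x_1=0\}$ but is not logarithmic); your computation $\pi^\ast(dt/t)=e_{R_i}\,ds/s+du/u$ verifies the correct condition, so phrase the reduction as checking membership in the stalk of $\Omega^1_X(\log\Delta(R))$ at each codimension-one point. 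Second, the matrix of $\phi$ at the generic point of $R_i$ is block triangular only modulo the maximal ideal --- the $ds/s$-component of $d(\pi^\ast y_j)$ is divisible by $s$ rather than zero --- but that already makes the reduction of $\det\phi$ equal to $e_{R_i}\cdot\det\bigl(d(\pi|_{R_i})\bigr)$ in $K(R_i)$, which is all you need for $\det\phi$ to be a unit in the local ring. Third, Lemma \ref{lem:cute1} is not actually needed for your codimension-one normal form: distinct prime divisors automatically meet only in codimension at least two, so no second ramification component can pass through the generic point of $R_i$ in any case.
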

\begin{proof}
The claim can be checked locally on $Y$.

Suppose $x_1,\ldots,x_n$ are a local system of coordinates at some point $\underline{x}$ of $X$, and $y_1,\ldots,y_n$ are a local system of coordinates near $\underline{y} = \pi(\underline{x})$.
We may suppose that $y_1,\ldots,y_\ell$ define the branch locus of $\pi$ near $\underline{y}$ and further that $\pi^\ast(y_i)  = x_i^{a_i}$ so that $x_1,\ldots,x_\ell$ define the ramification locus of $\pi$ near $\underline{x}$ (see proof of Lemma \ref{lem:cute2}). 
Set $\epsilon_i = 1$ if $i \leq \ell$ and $0$ otherwise.
Then the bundle $\Omega_Y(\log \Delta(R))$ has a basis of sections near $\underline{y}$ given by:
\[ \frac{dy_1}{y_1^{\epsilon_1}} ,\ldots, \frac{dy_n}{y_n^{\epsilon_n}}. \]
By the choice of $\epsilon_i$ we find that for all $i$:
\[ \frac{d (\pi^\ast y)}{ (\pi^\ast y)^{\epsilon_i}} = \frac{d (x_i^{a_i})}{ x_i^{a_i \epsilon_i}} = a_i\frac{dx_i}{x_i^{\epsilon_i}}  \]
we find that $\pi^\ast(\Omega_Y(\log \Delta(R)))$ has a basis of sections near $\underline{x}$:
\[ \frac{dx_1}{x_1^{\epsilon_1}} ,\ldots, \frac{dx_n}{x_n^{\epsilon_n}}. \]
This precisely agrees with the bundle $\Omega_X(\log \Delta(B))$ near $\underline{x}$.
\end{proof}

\section{Logarithmic Euler Characteristic}\label{sec:logeuler}

Aside from its present application to a Riemann-Hurwitz formula, the following definition is motivated in part by its appearance in Mumford's work in \cite[Cor. 3.5]{Mumford_Proportionality}.
\begin{df}
Let $X$ be a smooth projective variety and $\Delta$ be a collection of smooth divisors with simple normal crossings on $X$.
We define the logarithmic Euler characteristic of a sheaf $\cF$ on $X$ with respect to the boundary $\Delta$ to be:
\[ \chi(X,\Delta,\cF) = Q_n( \chernchar(\cF);  \chern_1(\Omega^1_{\overline{X}}(\log \Delta)),\ldots, \chern_n(\Omega^1_{\overline{X}}(\log \Delta))). \]
\end{df}

Though it is not a priori clear what use this definition can have, the following theorem shows that it in some sense behaves better than the standard Euler characteristic.

\begin{thm}\label{thm:pullbacklogchern}
Let $X$ and $Y$ be smooth projective varieties.
Consider $\pi: X \rightarrow Y$ a potentially ramified finite covering of degree $\mu$. 
Let $\Delta(B)$ be the collection of irreducible components of the branch locus (on $Y$) and $\Delta(R) = \pi^{-1}(\Delta(B))$ be the collection of (reduced) irreducible components of the ramification locus (on $X$). Suppose $\Delta(B)$ and  $\Delta(R)$ have simple normal crossings.

Let $\cF$ be any coherent sheaf on $Y$ then:
\[ \chi(X,\Delta(R),\pi^\ast(\cF)) = \mu \cdot \chi(Y,\Delta(B),\cF). \]
\end{thm}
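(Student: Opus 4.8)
The plan is to unwind both sides into the Riemann--Roch polynomial $Q_n$ of Theorem \ref{thm:RR}, express each as a degree-$n$ component of a product of a Chern character and a Todd class, and then reduce the whole identity to the already-established bundle equality $\Omega^1_X(\log\Delta(R)) = \pi^\ast\Omega^1_Y(\log\Delta(B))$ of Theorem \ref{thm:pullbacklog} together with the projection formula for the finite map $\pi$. Concretely, I would first rewrite the definition of the logarithmic Euler characteristic using the explicit formula for $Q_n$, so that
\[ \chi(X,\Delta(R),\pi^\ast\cF) = \deg_n\bigl(\chernchar(\pi^\ast\cF)\,{\rm Todd}(\Omega^1_X(\log\Delta(R)))\bigr), \]
and analogously express $\chi(Y,\Delta(B),\cF)$ as a $\deg_n$ of the corresponding product on $Y$.

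Next I would invoke the functoriality of the Chern character and the Todd class under pullback. Since $\pi$ is a finite map of smooth varieties of equal dimension it is flat (miracle flatness, as $X$ is Cohen--Macaulay, $Y$ is regular, and the fibres are $0$-dimensional), so $\pi^\ast$ is exact and $\chernchar(\pi^\ast\cF) = \pi^\ast\chernchar(\cF)$. By Theorem \ref{thm:pullbacklog} we have $\Omega^1_X(\log\Delta(R)) = \pi^\ast\Omega^1_Y(\log\Delta(B))$, and the Todd class commutes with pullback, whence ${\rm Todd}(\Omega^1_X(\log\Delta(R))) = \pi^\ast{\rm Todd}(\Omega^1_Y(\log\Delta(B)))$. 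As $\pi^\ast$ is a ring homomorphism on the Chow (or cohomology) ring, these combine to give
\[ \chernchar(\pi^\ast\cF)\,{\rm Todd}(\Omega^1_X(\log\Delta(R))) = \pi^\ast\bigl(\chernchar(\cF)\,{\rm Todd}(\Omega^1_Y(\log\Delta(B)))\bigr). \]

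Finally I would apply the projection formula together with the standard degree relation for finite maps. For a finite surjective morphism of degree $\mu$ between smooth projective varieties of dimension $n$ one has $\pi_\ast[X] = \mu[Y]$, hence $\pi_\ast\pi^\ast\eta = \mu\,\eta$ for every class $\eta$ on $Y$; since the degree of a $0$-cycle is preserved under proper pushforward to a point, this yields $\deg_n(\pi^\ast\eta) = \mu\deg_n(\eta)$. Applying this with $\eta = \chernchar(\cF)\,{\rm Todd}(\Omega^1_Y(\log\Delta(B)))$ gives
\[ \chi(X,\Delta(R),\pi^\ast\cF) = \mu\deg_n\bigl(\chernchar(\cF)\,{\rm Todd}(\Omega^1_Y(\log\Delta(B)))\bigr) = \mu\cdot\chi(Y,\Delta(B),\cF), \]
which is the claim. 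The genuine geometric content is concentrated entirely in Theorem \ref{thm:pullbacklog}; the remaining argument is formal. I expect the only points requiring care to be the flatness of $\pi$ (so that the Chern character of the ordinary sheaf pullback $\pi^\ast\cF$ really equals $\pi^\ast\chernchar(\cF)$, even when $\cF$ is not locally free) and the normalization $\pi_\ast[X] = \mu[Y]$ underlying the degree identity.
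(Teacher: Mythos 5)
Your proposal is correct and is essentially the paper's own proof, just with the details spelled out: the paper likewise combines Theorem \ref{thm:pullbacklog} with functoriality to get $\pi^\ast(\chernchar(\cF)\,{\rm Todd}(\Omega^1_Y(\log \Delta(B)))) = \chernchar(\pi^{\ast}\cF)\,{\rm Todd}(\Omega^1_X(\log \Delta(R)))$ and then concludes by the fact that pullback through a degree-$\mu$ finite cover multiplies degrees by $\mu$. Your explicit verification that $\pi$ is flat (miracle flatness), so that $\chernchar(\pi^\ast\cF)=\pi^\ast\chernchar(\cF)$ holds even for a coherent sheaf that is not locally free, is a worthwhile point of care that the paper subsumes under the word ``functoriality.''
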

\begin{proof}
By Theorem \ref{thm:pullbacklog} (and functoriality) we have that:
\[ \pi^\ast(\chernchar(\cF){\rm Todd}(\Omega^1_Y(\log \Delta(B)))) = \chernchar(\pi^{\ast}(\cF)){\rm Todd}(\Omega^1_X(\log \Delta(R))). \]
The result then follows by recalling that the effect of pullback on the degree of a class is to multiply by $\mu$.
\end{proof}

\section{Logarithmic Euler Characteristic vs The Euler Characteristic}\label{sec:logveuler}

The key to obtaining our results is the following comparison between the usual Euler characteristic and the logarithmic Euler characteristic we have just defined.

\begin{thm}\label{thm:eulvslogeul}
Let $X$ be a smooth projective variety and let $\cF$ be any coherent sheaf on $X$.
Suppose $\Delta$ is a collection of smooth divisors with simple normal crossings on $X$.
Then
\[
 \chi(X,\cF) - \chi(X,\Delta,\cF)  =  \sum_{\abs{\underline{b}}\ge 1} (-1)^{\abs{\underline{b}}} \constC_{\underline{b}} D^{\underline{b}} Q_{n-\abs{\underline{b}}}(\chernchar(\cF); \chern_1(\Omega_X^1(\log \Delta)),\ldots, \chern_{n-\abs{\underline{b}}}(\Omega_X^1(\log\Delta))) .
\]
The notation $D^{\underline{b}}$ is introduced in \ref{not:1}.(9), the polynomial $Q$ is defined in Theorem \ref{thm:RR}, the constants $\constC_{\underline{b}}$ are introduced in \ref{not:2}.(1).
\end{thm}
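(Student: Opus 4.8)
The plan is to express both Euler characteristics through Riemann-Roch and then exploit the multiplicative structure of $Q$ together with the Chern class factorization from Proposition \ref{prop:LogChernvsChern}. By Theorem \ref{thm:RR} we have $\chi(X,\cF) = Q_n(\chernchar(\cF); \chern_1(\Omega^1_X),\ldots,\chern_n(\Omega^1_X))$, while by definition $\chi(X,\Delta,\cF) = Q_n(\chernchar(\cF);\chern_1(\Omega^1_X(\log\Delta)),\ldots,\chern_n(\Omega^1_X(\log\Delta)))$. The first step is to observe, via Proposition \ref{prop:LogChernvsChern}, that $\chern(\Omega^1_X) = \chern(\Omega^1_X(\log\Delta))\cdot\prod_{D_i}(1-D_i)$, and that $\prod_{D_i}(1-D_i)$ is exactly the total Chern class of the bundle $\underset{D_i\in\Delta}\oplus\cO(-D_i)$, whose $k$-th Chern class is $(-1)^k\Delta_k$.

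Second, I would set $u_j = \chern_j(\Omega^1_X(\log\Delta))$ and $v_k = (-1)^k\Delta_k$, so that by the previous step $\chern_i(\Omega^1_X) = \sum_{j+k=i}u_j v_k$. This is precisely the hypothesis of Proposition \ref{prop:multQ}, which then gives
\[
\chi(X,\cF) = \sum_{\ell+m=n} Q_\ell(\chernchar(\cF); u_1,\ldots,u_\ell)\, Q_m(1; v_1,\ldots,v_m).
\]
The term with $m=0$ is $Q_n(\chernchar(\cF);u_1,\ldots,u_n)\cdot Q_0(1) = \chi(X,\Delta,\cF)$, so subtracting it leaves exactly the sum over $m\ge 1$.

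Third, I would evaluate $Q_m(1; v_1,\ldots,v_m)$ with $v_k=(-1)^k\Delta_k$. Since $Q_m(1;y_1,\ldots,y_m)$ is a sum of monomials $\prod_i y_i^{c_i}$ with $\sum_i i\,c_i = m$, substituting $y_i=(-1)^i\Delta_i$ pulls out a global sign $(-1)^m$, giving $Q_m(1;v_1,\ldots,v_m)=(-1)^m Q_m(1;\Delta_1,\ldots,\Delta_m)$. By the definition of $\constC_{\underline{b}}$ in Notation \ref{not:2}.(1), this equals $(-1)^m\sum_{\abs{\underline{b}}=m}\constC_{\underline{b}}D^{\underline{b}}$. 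Reindexing the outer sum by $\underline{b}$ with $m=\abs{\underline{b}}$ then yields exactly the claimed identity.

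The one point needing care rather than genuine difficulty is the sign bookkeeping in this last step: one must track that the factors $(-1)^k$ carried by each $v_k$ combine, across any monomial of cohomological degree $m$, into the single factor $(-1)^{\abs{\underline{b}}}$ appearing in the statement, which works out by homogeneity. Everything else is a direct application of the multiplicativity of $Q$ (Proposition \ref{prop:multQ}) and the Chern class factorization (Proposition \ref{prop:LogChernvsChern}); no geometric input beyond these is required.
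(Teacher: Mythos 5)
Your proof is correct and follows essentially the same route as the paper: both apply the multiplicativity of $Q$ (Proposition \ref{prop:multQ}) with $u_i = \chern_i(\Omega^1_X(\log\Delta))$ and $v_i = (-1)^i\Delta_i$ via the factorization of Proposition \ref{prop:LogChernvsChern}, then identify $Q_m(1;(-1)^1\Delta_1,\ldots,(-1)^m\Delta_m)$ with $\sum_{\abs{\underline{b}}=m}(-1)^{\abs{\underline{b}}}\constC_{\underline{b}}D^{\underline{b}}$ from the definition of $\constC_{\underline{b}}$. Your explicit justification of the sign extraction by weighted homogeneity of $Q_m$ is a detail the paper leaves implicit, but the argument is the same.
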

\begin{proof}
Recall that by Proposition \ref{prop:multQ} we have:
\[ Q_n(\chernchar(\underline{x}); y_1,\ldots,y_n) = \sum_{\ell + m = n} Q_\ell(\chernchar(\underline{x}); u_1,\ldots,u_\ell) Q_m(1; v_1,\ldots,v_m). \]
In this context if we set $x_i = \chern_i(\cF)$, $u_i = \chern_i(\Omega_X^1(\log \Delta))$, and $v_i = (-1)^i\Delta_i$ then by Proposition \ref{prop:LogChernvsChern} we have in the setting of Proposition \ref{prop:multQ} that $y_i = \chern_i(\Omega_X^1)$ and it follows that we can rewrite $ Q_n(\chernchar(\cF); \chern_1(\Omega_X^1),\ldots, \chern_n(\Omega_X^1)) $ as being equal to:
\[ \sum_{\ell+m=n} Q_\ell(\chernchar(\cF); \chern_1(\Omega_X^1 (\log\Delta))\ldots, \chern_{\ell}(\Omega_X^1(\log\Delta)))  Q_m(1;  (-1)^1\Delta_1, (-1)^2\Delta_2,\ldots, (-1)^m\Delta_m). \]
The result then follows from the observation that:
\[ Q_m(1;  (-1)^1\Delta_1, (-1)^2\Delta_2,\ldots, (-1)^m\Delta_m) = \sum_{\abs{\underline{b}}=m} (-1)^{\abs{\underline{b}}} \constC_{\underline{b}} D^{\underline{b}}. \qedhere\]
\end{proof}

\begin{nota}\label{not:logeuler}
If $\underline{a}$ is multiplicity free, so that $D^{\underline{a}}$ has no self intersections, then we may write
\[ D^{\underline{a}} = \sum_j x_j [C_j] \quad\text{ where }\quad \left(\underset{a_i\neq0}\cap\; D_i\right)_{red} = \underset{j}\cup \;C_j   \]
in this setting we interpret $\chi(D^{\underline{a}},\Delta',\cF|_{D^{\underline{a}}}) $ to mean: 
\[  \chi(D^{\underline{a}},\Delta',\cF|_{D^{\underline{a}}}) = \sum_i m_i\chi(C_i,\Delta',\cF|_{C_i}) \]
the weighted sum of the logarithmic Euler characteristics of the irreducible components of $D^{\underline{a}}$, the weights being precisely the intersection multiplicities. We interpret $\chi(D^{\underline{a}},\cF|_{D^{\underline{a}}}) $ similarly.
Both of these expressions most naturally live on the disjoint unions of irreducible components of $D^{\underline{a}}$. 
Note that in the context of simple normal crossings the intersection will already be reduced and the multiplicities, $m_i$, will all be $1$.

\medskip
When $\underline{a}$ is MF we have by Proposition \ref{prop:CHonBDY} that 
\[  \chi(D^{\underline{a}},\Delta',\cF|_{D^{\underline{a}}}) = D^{\underline{a}} Q_{n-\abs{\alpha}}(\chernchar(\cF); \chern_1(\Omega_X^1(\log\Delta)),\ldots,\chern_{n-\abs{\alpha}}(\Omega_X^1(\log\Delta))) \]
when this expression is viewed as an equality on the disjoint union of the irreducible components of $D^{\underline{a}}$.

\medskip
By an abuse of notation we shall extend this to the case where there may be self intersections and denote by:
\[  \chi(D^{\underline{a}},\Delta',\cF|_{D^{\underline{a}}}) = D^{\underline{a}} Q_{n-\abs{\alpha}}(\chernchar(\cF); \chern_1(\Omega_X^1(\log\Delta)),\ldots,\chern_{n-\abs{\alpha}}(\Omega_X^1(\log\Delta))) \]
even when $a_i$ are potentially greater than $1$ so that we may interpret $ \chi(D^{\underline{a}},\Delta',\cF|_{D^{\underline{a}}}) $ as an object on $X$. This interpretation is compatible with the interpretation as a push-forward whenever $\underline{a}$ is MF.
\end{nota}

\begin{cor}\label{cor:secondaryinduction}
With the same notation as in the Theorem,
if the irreducible components of $\Delta$ have trivial self intersection, then:
\[ \chi(X,\cF)- \chi(X,\Delta,\cF) = \sum_{\abs{\underline{b}}\ge 1} (-1)^{\abs{\underline{b}}}\constD_{\underline{b}} \chi(D^{\underline{b}},\cF|_{D^{\underline{b}}}). \]
The notation $D^{\underline{b}}$ is introduced in \ref{not:1}.(9), the constants $\constD_{\underline{b}}$ are introduced in \ref{not:2}.(4).
\end{cor}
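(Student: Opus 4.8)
The plan is to start from Theorem \ref{thm:eulvslogeul} and iteratively trade the logarithmic Euler characteristics appearing there for ordinary ones, descending through the codimension filtration of $\Delta$. First I would use the hypothesis $D_i^2=0$: it forces $D^{\underline{b}}=0$ for every non-multiplicity-free $\underline{b}$, so the right-hand side of Theorem \ref{thm:eulvslogeul} collapses to a sum over MF exponents. Using the identification recorded in Notation \ref{not:logeuler}, namely that on (the disjoint union of the components of) $D^{\underline{b}}$ one has $D^{\underline{b}}\,Q_{n-\abs{\underline{b}}}(\chernchar(\cF);\chern_1(\Omega^1_X(\log\Delta)),\ldots)=\chi(D^{\underline{b}},\Delta',\cF|_{D^{\underline{b}}})$, this rewrites the theorem as
\[ \chi(X,\cF)-\chi(X,\Delta,\cF)=\sum_{\substack{\underline{b}\text{ MF},\ \abs{\underline{b}}\ge1}}(-1)^{\abs{\underline{b}}}\constC_{\underline{b}}\,\chi(D^{\underline{b}},\Delta',\cF|_{D^{\underline{b}}}), \]
a weighted sum of \emph{logarithmic} Euler characteristics of the strata $D^{\underline{b}}$.

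The key step is that each stratum $D^{\underline{a}}$ is again smooth projective, its induced boundary $\Delta'$ still has simple normal crossings, and its boundary components still have trivial self-intersection (since $(D_i|_{D^{\underline{a}}})^2=(D_i^2)|_{D^{\underline{a}}}=0$). Hence Theorem \ref{thm:eulvslogeul} applies verbatim on $D^{\underline{a}}$, and combining it with the restriction formula of Proposition \ref{prop:CHonBDY} and the interpretation in Notation \ref{not:logeuler} yields, for every MF $\underline{a}$, the recursion
\[ \chi(D^{\underline{a}},\cF|_{D^{\underline{a}}})-\chi(D^{\underline{a}},\Delta',\cF|_{D^{\underline{a}}})=\sum_{\substack{\underline{c}\text{ MF},\ \abs{\underline{c}}\ge1\\ \supp\underline{c}\cap\supp\underline{a}=\emptyset}}(-1)^{\abs{\underline{c}}}\constC_{\underline{c}}\,\chi(D^{\underline{a}+\underline{c}},\Delta',\cF|_{D^{\underline{a}+\underline{c}}}), \]
the case $\underline{a}=0$ being the displayed identity above. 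Since the strata on the right have strictly larger support, and supports are bounded by $\dim X=n$, this is a finite recursion, which I would solve by repeated substitution (equivalently, descending induction on $\abs{\underline{a}}$) to express each logarithmic $\chi(D^{\underline{a}},\Delta',\cF|_{D^{\underline{a}}})$ as a combination of ordinary $\chi(D^{\underline{d}},\cF|_{D^{\underline{d}}})$ with MF $\underline{d}\ge\underline{a}$.

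Carrying out the substitution for $\underline{a}=0$ and collecting the coefficient of each ordinary $\chi(D^{\underline{d}},\cF|_{D^{\underline{d}}})$, each term acquires the sign $(-1)^{\abs{\underline{d}}}$ from $\prod_i(-1)^{\abs{\underline{c}_i}}$ together with a combinatorial factor $\sum_{k\ge0}(-1)^k\sum_{(\underline{c}_1,\ldots,\underline{c}_k)}\prod_i\constC_{\underline{c}_i}$, the inner sum running over ordered factorizations of $\underline{d}$ into nonzero MF parts of pairwise disjoint support. By the definition of $\constD_{\underline{b}}$ in Notation \ref{not:2}.(4) this factor is exactly $-\constD_{\underline{d}}$ (all parts being MF, the side conditions in that definition hold automatically). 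Isolating the $\underline{d}=0$ term, whose coefficient $-\constD_{(0)}=1$ reproduces $\chi(X,\cF)$, leaves precisely $\chi(X,\cF)-\chi(X,\Delta,\cF)=\sum_{\abs{\underline{d}}\ge1}(-1)^{\abs{\underline{d}}}\constD_{\underline{d}}\chi(D^{\underline{d}},\cF|_{D^{\underline{d}}})$; extending the sum over all $\underline{d}$ is harmless since $\chi(D^{\underline{d}},\cF|_{D^{\underline{d}}})=0$ when $\underline{d}$ is NMF. The main obstacle is the bookkeeping in this final step: tracking the signs and the MF/disjoint-support constraints carefully enough to recognize the accumulated coefficient as the defining alternating sum for $\constD_{\underline{d}}$ rather than some a priori different quantity.
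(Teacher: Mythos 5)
Your proposal is correct and follows essentially the same route as the paper's proof: iterate Theorem \ref{thm:eulvslogeul} over the strata (the trivial self-intersection hypothesis killing all NMF terms), then solve the resulting finite recursion and recognize the accumulated coefficient of $\chi(D^{\underline{d}},\cF|_{D^{\underline{d}}})$ as the alternating sum over ordered disjoint-support factorizations that defines $(-1)^{\abs{\underline{d}}}\constD_{\underline{d}}$. Your explicit checks that the hypothesis persists on strata (via $(D_i|_{D^{\underline{a}}})^2=(D_i^2)|_{D^{\underline{a}}}=0$) and that Proposition \ref{prop:CHonBDY} together with Notation \ref{not:logeuler} justifies the stratum-level recursion only make precise what the paper's proof leaves implicit.
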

\begin{proof}
In the above notation Theorem \ref{thm:eulvslogeul} gives us that:
\[  \chi(X,\cF)- \chi(X,\Delta,\cF) = \sum_{\abs{\underline{b}}\ge 1} (-1)^{\abs{\underline{b}}}\constC_{\underline{b}} \chi(D^{\underline{b}},\Delta',\cF|_{D^{\underline{b}}}). \]
As the same result allows us to compute  $\chi(D^{\underline{b}},\Delta',\cF|_{D^{\underline{b}}}) - \chi(D^{\underline{b}},\cF|_{D^{\underline{b}}})$ whenever $\underline{b}$ is MF a recursive process will allow us to write:
\[ \chi(X,\cF) - \chi(X,\Delta,\cF)  = \sum_{\abs{\underline{b}}\ge 1} e_{\underline{b}} \chi(D^{\underline{b}},\cF|_{D^{\underline{b}}}). \]
We must only show that $e_{\underline{b}} = (-1)^{\underline{b}}\constD_{\underline{b}}$

The coefficient of $\chi(D^{\underline{b}},\cF|_{D^{\underline{b}}})$ can be computed by explicitly writing out the result of the recursive process.
The process will yield a sequence of formulas, indexed by $\ell$, of the form:
\begin{align*}
\chi(X,\cF) - \chi(X,\Delta,\cF)  = \sum_{k=1}^{\ell-1} (-1)^{k+1} \sum_{(\underline{b}_1,\ldots,\underline{b}_k)} \left(\prod_{j=1}^k (-1)^{\abs{\underline{b}_j}}\constC_{\underline{b}_j}\right) \chi(D^{\sum_{j=1}^k \underline{b}_j}, \cF|_{D^{\sum_{j=1}^k \underline{b}_j}}) \\
+ (-1)^{\ell+1} \sum_{(\underline{b}_1,\ldots,\underline{b}_{\ell})} \left(\prod_{j=1}^{\ell}  (-1)^{\abs{\underline{b}_j}}\constC_{\underline{b}_j}\right) \chi(D^{\sum_{j=1}^k \underline{b}_j},\Delta' , \cF|_{D^{\sum_{j=1}^{\ell+1} \underline{b}_j}})
\end{align*}
In the summations the elements of the tuples $(\underline{b}_1,\ldots,\underline{b}_k)$ always have disjoint support and $\abs{\underline{b}_j} \ge 1$.
We note that in the context of this corollary we need never consider any terms where $\underline{b} = \sum_{j=1}^{\ell} \underline{b}_j$ is NMF as for each such term we have $D^{\underline{b}}$ vanishes.

The base case of the induction, the case $\ell=1$, is precisely the statement of Theorem \ref{thm:eulvslogeul}.

The formula for $\ell+1$ is obtained from that for $\ell$ by simply expanding every term:
\[ \chi(D^{\sum_{j=1}^k \underline{b}_j},\Delta' , \cF|_{D^{\sum_{j=1}^{\ell} \underline{b}_j}}) = \chi(D^{\sum_{j=1}^{\ell} \underline{b}_j}, \cF|_{D^{\sum_{j=1}^{\ell} \underline{b}_j}}) - \sum_{\underline{c}} (-1)^{\abs{\underline{c}}}\constC_{\underline{c}}\chi(D^{\underline{c}+\sum_{j=1}^k \underline{b}_j},\Delta', \cF|_{D^{\underline{c}+\sum_{j=1}^{\ell+1} \underline{b}_j}}) \]
with each term $\underline{c}$ in the summation avoiding the support of $\sum_{j=1}^k \underline{b}_j$.
This recursion terminates as soon as $\ell > n$ because then $D^{\underline{b}}$ is an intersection of more than $n$ divisors, hence empty.

By regrouping terms on $\chi(D^{\underline{b}},\cF|_{\underline{b}})$ we find that the coefficient of this term is precisely \[ (-1)^{\abs{\underline{b}}}\constD_{\underline{b}} = (-1)^{\abs{\underline{b}}} \sum_{k\ge 0} \underset{\sum {\underline{b}_j} = \underline{b}}{\sum_{(\underline{b}_1,\ldots,\underline{b}_k)}} \left(\prod_{j=1}^k \constC_{\underline{b}_j}\right). \]
In the summation we consider only terms with all $\abs{\underline{b}_j} \ge 1$ and where in the tuple $(\underline{b}_1,\ldots,\underline{b}_k)$ all of $\underline{b}_j$ have disjoint support. For $k$ sufficiently large the inner sum is an empty sum.
\end{proof}

\begin{rmk}
The proofs of the above theorem and corollary work formally when we replace $Q(x_1,\ldots,x_n; y_1,\ldots,y_n)$ by any other polynomial which is a multiplicative sequence in the $y_i$ with respect to products of varieties and such that the $x_j$ are `functorial' with respect to restriction.

We should also note that in light of Propositions \ref{prop:constC} and \ref{prop:constD} the coefficient $(-1)^{\abs{\underline{b}}}\constD_{\underline{b}}$ can be rewritten as $\constC_{(1)}^{\abs{\underline{b}}}$ whenever $\underline{b}$ is MF (as in the Corollary above or below). Also, by Proposition \ref{prop:constDNMF} the constants $\constD_{\underline{b}}$ in the Corollary below are typically $0$ when $\underline{b}$ is NMF.
\end{rmk}

\begin{cor}\label{cor:leprim/imprim}
With the same notation as in the Theorem, we have:
\begin{align*}
 \chi(X,\cF)  - \chi(X,\Delta,\cF) = \underset{\abs{\underline{b}} \ge 1}{\sum_{\underline{b}\;{\rm MF}}} &(-1)^{\abs{\underline{b}}}\constD_{\underline{b}} \chi(D^{\underline{b}},\cF|_{D^{\underline{b}}})
 \\&+   \sum_{\underline{b}\;{\rm NMF}} (-1)^{\abs{\underline{b}}}{\constD}_{\underline{b}} \chi(D^{\underline{b}},\Delta',\cF|_{D^{\underline{b}}}).
\end{align*}
The notation $D^{\underline{b}}$ is introduced in \ref{not:1}.(9), the terminology MF and NMF is from \ref{not:2}.(2), the constants $\constD_{\underline{b}}$ are introduced in \ref{not:2}.(4).
\end{cor}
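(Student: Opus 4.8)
The plan is to run exactly the recursion used in the proof of Corollary \ref{cor:secondaryinduction}, but to stop expanding as soon as a stratum acquires a self-intersection, keeping those NMF terms in their logarithmic form rather than appealing to the vanishing $D^{\underline{b}}=0$ that was available under the trivial self-intersection hypothesis. First I would rewrite Theorem \ref{thm:eulvslogeul} in the notation of \ref{not:logeuler} as the base case
\[ \chi(X,\cF)-\chi(X,\Delta,\cF) = \sum_{\abs{\underline{b}}\ge 1} (-1)^{\abs{\underline{b}}}\constC_{\underline{b}}\,\chi(D^{\underline{b}},\Delta',\cF|_{D^{\underline{b}}}), \]
which is the $\ell=1$ step. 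The recursive step is to apply Theorem \ref{thm:eulvslogeul} once more to each logarithmic term $\chi(D^{\underline{b}},\Delta',\cF|_{D^{\underline{b}}})$, but only when $\underline{b}$ is MF: in that case $D^{\underline{b}}$ is, by the simple normal crossings hypothesis together with Proposition \ref{prop:CHonBDY}, a disjoint union of smooth projective subvarieties carrying the restricted boundary $\Delta'$, so the theorem applies verbatim and peels off one further coprime boundary piece $\underline{c}$ avoiding the support of $\underline{b}$.

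The key structural observation I would emphasize is that the accumulated exponent stays MF exactly until a NMF piece is introduced. Since every peeled piece has support disjoint from the pieces already removed, and a sum of MF exponents with pairwise disjoint support is again MF, the only way the running stratum $D^{\sum_j \underline{b}_j}$ can become NMF is for the newly introduced piece $\underline{c}$ to itself be NMF. At that moment $D^{\sum_j \underline{b}_j}$ involves a genuine self-intersection, Proposition \ref{prop:CHonBDY} no longer applies, and the recursion terminates on that branch with a logarithmic term $\chi(D^{\underline{b}},\Delta',\cF|_{D^{\underline{b}}})$ for NMF $\underline{b}$. The branches instead terminate on fully reduced terms $\chi(D^{\underline{b}},\cF|_{D^{\underline{b}}})$ precisely when the final peel leaves an MF exponent, and every branch terminates for dimension reasons once $\abs{\underline{b}}>n$, since $D^{\underline{b}}$ then involves an intersection of more than $n$ divisors.

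Collecting coefficients then reproduces $\constD_{\underline{b}}$ by exactly the regrouping carried out in Corollary \ref{cor:secondaryinduction}: each branch of the recursion corresponds to an ordered factorization $\underline{b}=\sum_{j=1}^k \underline{b}_j$ into coprime nontrivial pieces with $\underline{b}_1,\ldots,\underline{b}_{k-1}$ all MF (these being the smooth strata we were able to restrict to) and $\underline{b}_k$ the only potentially NMF piece, and it contributes $(-1)^{\abs{\underline{b}}}(-1)^{k+1}\prod_j \constC_{\underline{b}_j}$. Summing over all such factorizations gives the coefficient $(-1)^{\abs{\underline{b}}}\constD_{\underline{b}}$ of each terminal term, where for MF $\underline{b}$ this is attached to $\chi(D^{\underline{b}},\cF|_{D^{\underline{b}}})$ and for NMF $\underline{b}$ to $\chi(D^{\underline{b}},\Delta',\cF|_{D^{\underline{b}}})$; these two families are exactly the two sums in the statement. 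The only delicate point, and the step I expect to require the most care, is verifying that the constraint built into the definition of $\constD_{\underline{b}}$ in \ref{not:2}.(4) — that all factors but the last be MF — is precisely the constraint enforced by the recursion, namely that restriction via Proposition \ref{prop:CHonBDY} is available only along smooth (MF) strata, so that at most one NMF piece can appear and only as the terminal factor.
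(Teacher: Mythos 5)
Your proposal is correct and follows essentially the same route as the paper: the paper's proof simply reruns the recursion from Corollary \ref{cor:secondaryinduction}, retaining rather than discarding the NMF logarithmic terms, with $\constD_{\underline{b}}$ by definition counting the signed ordered factorizations in which only the final factor is potentially NMF --- exactly your argument. The details you supply (disjoint supports forcing the running exponent to stay MF until an NMF piece is peeled, Proposition \ref{prop:CHonBDY} licensing each MF expansion, and termination once $\abs{\underline{b}}>n$) are precisely what the paper leaves implicit.
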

\begin{proof}
The argument is the same as above, except rather than being able to completely ignore any NMF term which may appear, we simply include their contribution separately. The constant ${\constD}_{\underline{b}}$ is defined precisely so as to count the appropriate weighted count of the number of possible factorizations of $D^{\underline{b}}$ in which terms have disjoint support and only the final term is potentially NMF.
\end{proof}

\section{Riemann-Hurwitz}\label{sec:rh}

In this section we establish our main result.

\begin{thm}\label{thm:main}
Consider $\pi: X \rightarrow Y$ a potentially ramified finite covering of degree $\mu$ between smooth projective varieties of dimension $n$. 
Let $\Delta(B)$ be the collection of irreducible components of the branch locus (on $Y$) and $\Delta(R) = \pi^{-1}(\Delta(B))$ be the collection of (reduced) irreducible components of the ramification locus (on $X$).
Let $\cF$ be any coherent sheaf on $Y$.
Suppose that $\Delta(R)$ and $\Delta(B)$ consist of simple normal crossing divisors.
We have that:
\[ \chi(X, \pi^\ast( \cF)) - \mu\cdot \chi(Y,\cF) = \sum_{\underline{a}} (-1)^{\abs{\underline{a}}}\constC_{\underline{a}}(E_{R^{\underline{a}}} - 1)\chi(R^{\underline{a}},\Delta(R)', \pi^\ast( \cF)).
 \]
The notation $D^{\underline{b}}$ is introduced in \ref{not:1}.(9), the constants $\constC_{\underline{b}}$ are introduced in \ref{not:2}.(1), the notation $E_{R^{\underline{a}}}$ is from \ref{not:ERA}, and the notation $\chi(R^{\underline{a}},\Delta(R)', \pi^\ast( \cF))$ is from \ref{not:logeuler}.
\end{thm}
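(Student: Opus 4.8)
The plan is to compare each of $\chi(X,\pi^\ast(\cF))$ and $\mu\cdot\chi(Y,\cF)$ to its logarithmic analogue and then cancel the logarithmic pieces using Theorem \ref{thm:pullbacklogchern}. Applying Theorem \ref{thm:eulvslogeul} on $X$ with boundary $\Delta(R)$ and sheaf $\pi^\ast(\cF)$, and separately on $Y$ with boundary $\Delta(B)$ and sheaf $\cF$, and rewriting each summand with the notation of \ref{not:logeuler}, gives
\[ \chi(X,\pi^\ast\cF) - \chi(X,\Delta(R),\pi^\ast\cF) = \sum_{\abs{\underline{a}}\ge1}(-1)^{\abs{\underline{a}}}\constC_{\underline{a}}\,\chi(R^{\underline{a}},\Delta(R)',\pi^\ast\cF) \]
and
\[ \chi(Y,\cF) - \chi(Y,\Delta(B),\cF) = \sum_{\abs{\underline{b}}\ge1}(-1)^{\abs{\underline{b}}}\constC_{\underline{b}}\,\chi(B^{\underline{b}},\Delta(B)',\cF). \]
Since Theorem \ref{thm:pullbacklogchern} asserts $\chi(X,\Delta(R),\pi^\ast\cF)=\mu\cdot\chi(Y,\Delta(B),\cF)$, subtracting $\mu$ times the second identity from the first cancels both logarithmic Euler characteristics, so that $\chi(X,\pi^\ast\cF)-\mu\cdot\chi(Y,\cF)$ equals the difference of the two right-hand sums.

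The heart of the argument is to transport the branch-side terms onto $X$. By \ref{not:logeuler} the number $\chi(B^{\underline{b}},\Delta(B)',\cF)$ is the degree on $Y$ of $B^{\underline{b}}\,Q_{n-\abs{\underline{b}}}(\chernchar(\cF);\chern_1(\Omega^1_Y(\log\Delta(B))),\ldots)$. Because pullback along $\pi$ multiplies degrees of top classes by $\mu$, I would express $\mu\cdot\chi(B^{\underline{b}},\Delta(B)',\cF)$ as the degree on $X$ of the pullback of this class. Theorem \ref{thm:pullbacklog} identifies $\pi^\ast\Omega^1_Y(\log\Delta(B))$ with $\Omega^1_X(\log\Delta(R))$, and functoriality replaces $\chernchar(\cF)$ by $\chernchar(\pi^\ast\cF)$, while Lemma \ref{lem:cute3}.(3) expands $\pi^\ast(B^{\underline{b}})=\sum_{\pi(\underline{a})=\underline{b}}E_{R^{\underline{a}}}R^{\underline{a}}$. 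Recognising $R^{\underline{a}}\,Q_{n-\abs{\underline{a}}}(\cdots)$ as $\chi(R^{\underline{a}},\Delta(R)',\pi^\ast\cF)$ then yields
\[ \mu\cdot\chi(B^{\underline{b}},\Delta(B)',\cF) = \sum_{\pi(\underline{a})=\underline{b}}E_{R^{\underline{a}}}\,\chi(R^{\underline{a}},\Delta(R)',\pi^\ast\cF). \]

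It remains to reindex the double sum $\sum_{\abs{\underline{b}}\ge1}\sum_{\pi(\underline{a})=\underline{b}}$ as a single sum over $\underline{a}$. Here I use that each component $R_i$ lies over a unique $B_j$, so $\pi(\underline{a})$ is well defined, together with Lemma \ref{lem:cute3}.(1), which forces $R^{\underline{a}}=0$ whenever the monomial type of $\underline{a}$ differs from that of $\pi(\underline{a})$; thus only terms of matching monomial type survive, and for these $\abs{\underline{a}}=\abs{\underline{b}}$ and $\constC_{\underline{a}}=\constC_{\underline{b}}$, since the constants of \ref{not:2}.(1) depend only on monomial type. The ramification-side sum carries multiplicity $1$ on each $R^{\underline{a}}$, while the transported branch-side sum carries $E_{R^{\underline{a}}}$; the two expansions therefore differ exactly in this factor, and their combination produces the coefficient $(-1)^{\abs{\underline{a}}}\constC_{\underline{a}}(E_{R^{\underline{a}}}-1)$ on $\chi(R^{\underline{a}},\Delta(R)',\pi^\ast\cF)$, which is the claimed formula.

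I expect the main obstacle to be precisely this matching of strata: one must check that the branch monomials $B^{\underline{b}}$ and the ramification monomials $R^{\underline{a}}$ index the two sums compatibly, that mismatched monomial types contribute nothing, and that the universal constants $\constC$ transfer unchanged across $\pi$. Once this combinatorial alignment is in place, and the sign coming from the two applications of Theorem \ref{thm:eulvslogeul} is tracked carefully, the identity follows by pure algebra from the three inputs above (Theorem \ref{thm:eulvslogeul}, Theorem \ref{thm:pullbacklogchern}, and Lemma \ref{lem:cute3} via Theorem \ref{thm:pullbacklog}).
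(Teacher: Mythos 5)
Your proposal is correct and takes essentially the same route as the paper's own proof: expand both $\chi(X,\pi^\ast\cF)$ and $\mu\cdot\chi(Y,\cF)$ via Theorem \ref{thm:eulvslogeul}, cancel the logarithmic Euler characteristics using Theorem \ref{thm:pullbacklogchern}, transport the branch-side terms to $X$ via the degree-$\mu$ behaviour of pullback together with Theorem \ref{thm:pullbacklog} and Lemma \ref{lem:cute3}.(3), and regroup the double sum over $\underline{a}$. You even make explicit a point the paper leaves implicit, namely that strata of mismatched monomial type contribute nothing by Lemma \ref{lem:cute3}.(1) and that $\constC_{\underline{a}}=\constC_{\underline{b}}$ for the surviving terms since these constants depend only on monomial type.
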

\begin{proof}
Firstly, by Theorem \ref{thm:eulvslogeul} we have:
\begin{align*}
\chi(X, \pi^\ast( \cF)) - \mu\cdot \chi(Y,\cF) =  \sum_{\abs{\underline{a}}\ge 0} &(-1)^{\abs{\underline{a}}}\constC_{\underline{a}} \chi(R^{\underline{a}},\Delta(R)',\pi^\ast( \cF)|_{R^{\underline{a}}})\\ &- \mu\left( \sum_{\abs{\underline{b}}\ge 0} (-1)^{\abs{\underline{b}}}\constC_{\underline{b}} \chi(B^{\underline{b}},\Delta',\cF|_{B^{\underline{b}}})  \right). 
\end{align*}
Next, by Theorem \ref{thm:pullbacklogchern} we have that:
\[ \chi(X,  \Delta(R), \pi^\ast (\cF))) = \mu\cdot \chi(Y,  \Delta(B), \cF)) \]
so that these terms cancel out in the above expression.

With the remaining terms we can naturally group together those terms involving $\underline{a}$ and those with $\pi(\underline{a}) = \underline{b}$ in the summation above.
The error term arising from $\underline{a}$ in the expansion is:
\[  (-1)^{\abs{\underline{a}}}(\mu \constC_{\underline{b}} \chi(B^{\underline{b}},  \Delta(B)', \cF)) - \sum_{\pi(\underline{a}) = \underline{b}} \constC_{\underline{a}}\chi(R^{\underline{a}},  \Delta(R)', \pi^\ast (\cF))). \]
Next, we observe that:
\begin{align*} \mu\cdot \chi(B^{\underline{b}},  \Delta(B)', \cF)) &= \mu (B^{\underline{b}} Q_n(\chernchar(\cF);\chern_1(\Omega_Y^i(\log\Delta(B))),\ldots,\chern_n(\Omega_Y^i(\log\Delta(B)))) \\
                                       &= \pi^\ast (B^{\underline{b}} Q_n(\chernchar(\cF);\chern_1(\Omega_X^i(\log\Delta(B))),\ldots,\chern_n(\Omega_X^i(\log\Delta(B)))) \\
                                       &=  \pi^\ast(B^{\underline{b}})  Q_n(\chernchar(\pi^\ast( \cF));\chern_1(\Omega_X^i(\log\Delta(R))),\ldots,\chern_n(\Omega_X^i(\log\Delta(R)))).
\end{align*}
By Lemma \ref{lem:cute3} we have that
\[  \pi^\ast(B^{\underline{b}}) = \sum_{\pi(\underline{a}) = \underline{b}} E_{R^{\underline{a}}} R^{\underline{a}} \]
and so we obtain:
\[  \mu \cdot \chi(B^{\underline{b}},  \Delta(B)', \cF))  =  \sum_{\pi(\underline{a}) = \underline{b}} E_{R^{\underline{a}}}\chi(R^{\underline{a}},  \Delta(R)', \pi^\ast(\cF)). \]
Grouping the terms on $\underline{a}$ we now immediately see that the contribution from the $\underline{a}$ terms is:
\[  (-1)^{\abs{\underline{a}}}\constC_{\underline{a}}(E_{R^{\underline{a}}} - 1)\chi(R^{\underline{a}},\Delta(R)', \pi^\ast (\cF))). \]
Collecting these over all $\underline{a}$ we obtain the theorem.
\end{proof}

The coefficients in the following corollary can be rewritten using Propositions \ref{prop:simpleMF} and \ref{prop:simpleNMF}.
\begin{cor}\label{cor:main}
Consider $\pi: X \rightarrow Y$ a potentially ramified finite covering of degree $\mu$ between smooth projective varieties of dimension $n$. 
Let $\Delta(B)$ be the collection of irreducible components of the branch locus (on $Y$) and $\Delta(R) = \pi^{-1}(\Delta(B))$ be the collection of (reduced) irreducible components of the ramification locus (on $X$).
Let $\cF$ be any coherent sheaf on $Y$.
Suppose that $\Delta(R)$ and $\Delta(B)$ consist of simple normal crossing divisors.
Then the difference $\chi(X, \pi^\ast (\cF))) - \mu \cdot \chi(Y,\cF)$ is equal to:
\begin{align*}  \sum_{\underline{a}\; {\rm MF}} &(-1)^{\abs{\underline{a}}} \left( \underset{\abs{\underline{a}'} \ge 1}{\sum_{\underline{a}'\leq\underline{a}}}(- \constD_{\underline{a}-\underline{a}'}\constC_{\underline{a}'})(E_{R^{\underline{a}'}} - 1)\right)\chi(R^{\underline{a}}, \pi^\ast (\cF))) \\
&+  \sum_{\underline{a}\; {\rm NMF}} (-1)^{\abs{\underline{a}}}\left( \constC_{\underline{a}}(E_{R^{\underline{a}}} - 1) + \underset{\abs{\underline{a}'} \ge 1}{\sum_{\underline{a}'\leq\hat{\underline{a}}}}(- \constD_{\underline{a}-\underline{a}'}\constC_{\underline{a}'})(E_{R^{\underline{a}'}} - 1)\right)\chi(R^{\underline{a}},\Delta(R)', \pi^\ast (\cF))).
\end{align*}
The notation $D^{\underline{b}}$ is introduced in \ref{not:1}.(9), the constants $\constC_{\underline{b}}$ are introduced in \ref{not:2}.(1),  the terminology MF and NMF is from \ref{not:2}.(2), the constants $\constD_{\underline{b}}$ are introduced in \ref{not:2}.(4), the notation $E_{R^{\underline{a}}}$ is from \ref{not:ERA}, and the notation $\chi(R^{\underline{a}},\Delta(R)', \pi^\ast( \cF))$ is from \ref{not:logeuler}.
\end{cor}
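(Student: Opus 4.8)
The plan is to start from Theorem~\ref{thm:main}, which already writes $\chi(X,\pi^\ast(\cF)) - \mu\cdot\chi(Y,\cF)$ as a single sum over monomial exponents $\underline{a}$ of logarithmic Euler characteristics $\chi(R^{\underline{a}},\Delta(R)',\pi^\ast(\cF))$, and to convert these into ordinary Euler characteristics by applying the comparison of Corollary~\ref{cor:leprim/imprim} stratum by stratum. First I would observe that the $\underline{a}=0$ term drops out, since $E_{R^{0}}=1$ makes the factor $(E_{R^{\underline{a}}}-1)$ vanish, so the sum effectively runs over $\abs{\underline{a}}\ge 1$. I would then split it according to whether $\underline{a}$ is MF or NMF, treating the two ranges quite differently: the MF strata are genuine transverse intersections that can be expanded, whereas the NMF strata are the formal objects of Notation~\ref{not:logeuler} and will be carried through unchanged.

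For an MF exponent $\underline{a}'$, the stratum $R^{\underline{a}'}$ is smooth projective with simple normal crossing boundary $\Delta(R)'$, whose logarithmic Chern classes restrict correctly by Proposition~\ref{prop:CHonBDY}, so I may apply Corollary~\ref{cor:leprim/imprim} on each irreducible component and rearrange it to solve for the logarithmic term:
\[ \chi(R^{\underline{a}'},\Delta(R)',\pi^\ast(\cF)) = -\sum_{\underline{c}\ {\rm MF}}(-1)^{\abs{\underline{c}}}\constD_{\underline{c}}\,\chi(R^{\underline{a}'+\underline{c}},\pi^\ast(\cF)) - \sum_{\underline{c}\ {\rm NMF}}(-1)^{\abs{\underline{c}}}\constD_{\underline{c}}\,\chi(R^{\underline{a}'+\underline{c}},\Delta(R)',\pi^\ast(\cF)), \]
where $\underline{c}$ ranges over monomial exponents supported away from $\underline{a}'$ and the intersection products $D^{\underline{c}}$ computed on $R^{\underline{a}'}$ are identified with $R^{\underline{a}'+\underline{c}}$ on $X$. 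In the MF sum I include $\underline{c}=0$ using the value $\constD_{0}=-1$, which absorbs the leading term $\chi(R^{\underline{a}'},\pi^\ast(\cF))$ (its coefficient $-(-1)^{0}\constD_{0}=1$). The NMF strata appearing in Theorem~\ref{thm:main} cannot be expanded geometrically and are simply retained.

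The bulk of the work is the re-indexing: substitute the displayed expansion into Theorem~\ref{thm:main} and collect by the target exponent $\underline{a}=\underline{a}'+\underline{c}$. For MF $\underline{a}$ the coefficient of $\chi(R^{\underline{a}},\pi^\ast(\cF))$ becomes $\sum_{\underline{a}'\le\underline{a}}(-1)^{\abs{\underline{a}'}}\constC_{\underline{a}'}(E_{R^{\underline{a}'}}-1)\cdot\bigl(-(-1)^{\abs{\underline{a}-\underline{a}'}}\constD_{\underline{a}-\underline{a}'}\bigr)$, where $\underline{a}'$ ranges over the MF sub-exponents with $\abs{\underline{a}'}\ge 1$. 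The sign identity $(-1)^{\abs{\underline{a}'}}(-1)^{\abs{\underline{a}-\underline{a}'}}=(-1)^{\abs{\underline{a}}}$ factors out the overall sign and yields exactly the claimed MF coefficient $(-1)^{\abs{\underline{a}}}\sum_{\underline{a}'\le\underline{a}}(-\constD_{\underline{a}-\underline{a}'}\constC_{\underline{a}'})(E_{R^{\underline{a}'}}-1)$.

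The step I expect to be the main obstacle to state cleanly is the NMF case, where one must pin down the range of $\underline{a}'$. The surviving logarithmic term $\chi(R^{\underline{a}},\Delta(R)',\pi^\ast(\cF))$ receives the direct contribution $(-1)^{\abs{\underline{a}}}\constC_{\underline{a}}(E_{R^{\underline{a}}}-1)$ from the NMF stratum of Theorem~\ref{thm:main} itself, together with the NMF tails of the MF-stratum expansions. The requirement that $\underline{c}=\underline{a}-\underline{a}'$ be supported away from $\underline{a}'$, with $\underline{a}'$ MF and $\underline{c}$ NMF, forces $\underline{a}'$ to avoid every index where $a_i\ge 2$, which is exactly the condition $\underline{a}'\le\hat{\underline{a}}$; applying the same sign consolidation assembles the two contributions into $(-1)^{\abs{\underline{a}}}\bigl[\constC_{\underline{a}}(E_{R^{\underline{a}}}-1)+\sum_{\underline{a}'\le\hat{\underline{a}},\,\abs{\underline{a}'}\ge1}(-\constD_{\underline{a}-\underline{a}'}\constC_{\underline{a}'})(E_{R^{\underline{a}'}}-1)\bigr]$, matching the statement. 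A final check confirms there is no overlap between these tails and the direct NMF term, since $\hat{\underline{a}}<\underline{a}$ strictly whenever $\underline{a}$ is NMF, so $\underline{a}'=\underline{a}$ never occurs in the tail sum.
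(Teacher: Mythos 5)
Your proposal is correct and follows essentially the same route as the paper, whose proof simply invokes the argument of Corollary~\ref{cor:leprim/imprim}: starting from Theorem~\ref{thm:main}, expanding the logarithmic terms $\chi(R^{\underline{a}'},\Delta(R)',\pi^\ast(\cF))$ for MF $\underline{a}'$ via that recursion, retaining the NMF logarithmic terms, and collecting coefficients on each target exponent. Your bookkeeping matches the paper's exactly --- the MF coefficient from the $-\constD_{\underline{a}-\underline{a}'}\constC_{\underline{a}'}$ tails and the direct $\constC_{\underline{a}}(E_{R^{\underline{a}}}-1)$ term in the NMF case --- and your explicit derivation of the range $\underline{a}'\leq\hat{\underline{a}}$ and the $\constD_{(0)}=-1$ device for absorbing the leading term are sound elaborations of details the paper leaves implicit.
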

\begin{proof}
The proof is the same as that for Corollary \ref{cor:leprim/imprim}.
The terms $-\constD_{\underline{a}-\underline{a}'}\constC_{\underline{a}'}(E_{R^{\underline{a}'}} - 1)$  account for the  contribution to the coefficient of $\chi(R^{\underline{a}}, \pi^\ast (\cF)))$ from the expansion of the terms $\chi(R^{\underline{a'}},\Delta' ,\pi^\ast (\cF)))$ where $\underline{a'}$ is MF.
The term $\constC_{\underline{a}}(E_{R^{\underline{a}}} - 1)$ in the NMF case accounts for the contribution of the term which already appears in Theorem \ref{thm:main}.
\end{proof}

\begin{prop}\label{prop:simpleMF}
If $\underline{a}$ is MF and $R^{\underline{a}} = R_1\cdots R_k$ then:
\[ (-1)^{\abs{\underline{a}}}  \underset{\abs{\underline{a}'} \ge 1}{\sum_{\underline{a}'\leq\underline{a}}}(- \constD_{\underline{a}-\underline{a}'}\constC_{\underline{a}'})(E_{R^{\underline{a}'}} - 1) = \constC_{\underline{a}}\prod_{i=1}^k (1-e_{R_i}). \]
\end{prop}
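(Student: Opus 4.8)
The plan is to reduce the left-hand side to a purely combinatorial sum over subsets of the support of $\underline{a}$, and then to recognize that sum as an inclusion--exclusion expansion of $\prod_i(1-e_{R_i})$.

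First I would set up the indexing. Since $\underline{a}$ is MF with $R^{\underline{a}}=R_1\cdots R_k$, write $I=\{1,\ldots,k\}$ for its support. Every $\underline{a}'$ with $\underline{a}'\le\underline{a}$ and $\abs{\underline{a}'}\ge1$ is again MF and corresponds bijectively to a nonempty subset $S\subseteq I$; the complementary exponent $\underline{a}-\underline{a}'$ is then the MF exponent supported on $I\setminus S$, and $\underline{a}'$ and $\underline{a}-\underline{a}'$ have disjoint support with $\abs{\underline{a}'}=\abs{S}$. This replaces the sum over $\underline{a}'$ by a sum over $\emptyset\neq S\subseteq I$.

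Next I would evaluate each factor in closed form. Because the one-element exponents have pairwise disjoint support, Proposition \ref{prop:constC} gives $\constC_{\underline{a}'}=\constC_{(1)}^{\abs{S}}$ and $\constC_{\underline{a}}=\constC_{(1)}^{k}$, while $E_{R^{\underline{a}'}}=\prod_{i\in S}e_{R_i}$ by the definition in \ref{not:ERA}. For the MF exponent $\underline{a}-\underline{a}'$, Proposition \ref{prop:constD} evaluates the alternating factorization count, giving $\constD_{\underline{b}}=(-1)^{\abs{\underline{b}}+1}\constC_{\underline{b}}$ for MF $\underline{b}$ (consistent with the values $\constD_{(0)}=-1,\ \constD_{(1)}=\tfrac12,\ \constD_{(1,1)}=-\tfrac14$ recorded in \ref{not:2}.(4)); hence $\constD_{\underline{a}-\underline{a}'}=(-1)^{k-\abs{S}+1}\constC_{(1)}^{\,k-\abs{S}}$. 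Substituting these and collecting the powers of $\constC_{(1)}$ produces the common factor $\constC_{(1)}^{k}=\constC_{\underline{a}}$, and combining the signs $(-1)^{\abs{\underline{a}}}$, the $-1$ in front of $\constD_{\underline{a}-\underline{a}'}\constC_{\underline{a}'}$, and $(-1)^{k-\abs{S}+1}$ leaves exactly $(-1)^{\abs{S}}$ inside the sum, so the left-hand side becomes
\[ \constC_{\underline{a}}\sum_{\emptyset\neq S\subseteq I}(-1)^{\abs{S}}\left(\prod_{i\in S}e_{R_i}-1\right). \]

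It then remains to prove the combinatorial identity
\[ \sum_{\emptyset\neq S\subseteq I}(-1)^{\abs{S}}\left(\prod_{i\in S}e_{R_i}-1\right)=\prod_{i=1}^{k}(1-e_{R_i}). \]
I would split the left side into $\sum_{\emptyset\neq S}(-1)^{\abs{S}}\prod_{i\in S}e_{R_i}$ and $-\sum_{\emptyset\neq S}(-1)^{\abs{S}}$. Adjoining the empty set to each, the first equals $\prod_i(1-e_{R_i})-1$ by the product expansion $\sum_{S\subseteq I}(-1)^{\abs{S}}\prod_{i\in S}e_{R_i}=\prod_i(1-e_{R_i})$, while $\sum_{S\subseteq I}(-1)^{\abs{S}}=0$ for $k\ge1$ shows the second contributes $+1$; their combination is precisely $\prod_i(1-e_{R_i})$. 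Multiplying back by $\constC_{\underline{a}}$ gives the claim.

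The main obstacle is the sign bookkeeping rather than any conceptual difficulty: one must correctly evaluate $\constD_{\underline{a}-\underline{a}'}$ for the MF complement via Proposition \ref{prop:constD} and verify that the four sources of signs collapse to the single factor $(-1)^{\abs{S}}$. Once the reduction to the displayed subset sum is made, the final identity is an immediate inclusion--exclusion, so the real content lies entirely in the careful accounting of the $\constC$, $\constD$, and $E$ factors.
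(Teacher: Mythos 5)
Your proof is correct and follows essentially the same route as the paper's: evaluate $\constD_{\underline{a}-\underline{a}'}$ for the MF complement via Proposition \ref{prop:constD}, use multiplicativity of $\constC$ (Proposition \ref{prop:constC}) to pull out the common factor $\constC_{\underline{a}}$, and finish with the inclusion--exclusion identity that the paper dismisses as ``a direct computation.'' Note also that your sign $\constD_{\underline{b}}=(-1)^{\abs{\underline{b}}+1}\constC_{\underline{b}}$ for MF $\underline{b}$ is the one consistent with the tabulated values ($\constD_{(0)}=-1$, $\constD_{(1)}=\tfrac12$, $\constD_{(1,1)}=-\tfrac14$), whereas Proposition \ref{prop:constD} as printed says $(-1)^{\abs{\underline{b}}}$ --- a sign slip that the paper's own proof of the present proposition silently compensates with a second slip in its final display, so your bookkeeping is in fact the internally consistent version of the argument.
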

\begin{proof}
When $\underline{a}$ is MF we have by Proposition \ref{prop:constD} that:
\[ (-1)^{\abs{\underline{a}}}\constD_{\underline{a}-\underline{a}'}\constC_{\underline{a}'}= (-1)^{\underline{a}'}\constC_{\underline{a}}. \]
It follows that:
\[ (-1)^{\abs{\underline{a}}}  \underset{\abs{\underline{a}'} \ge 1}{\sum_{\underline{a}'\leq\underline{a}}}(- \constD_{\underline{a}-\underline{a}'}\constC_{\underline{a}'})(E_{R^{\underline{a}'}} - 1) = \constC_{\underline{a}}\underset{\abs{\underline{a}'} \ge 1}{\sum_{\underline{a}'\leq\underline{a}}} (-1)^{\abs{\underline{a}'}}(1-E_{R^{\underline{a}'}}). \]
A direct computation yields that:
\[ \underset{\abs{\underline{a}'} \ge 1}{\sum_{\underline{a}'\leq\underline{a}}}(-1)^{\underline{a}'}(1-E_{R^{\underline{a}'}}) = \prod_{i=1}^k (1-e_{R_i}) \]
from which the result follows.
\end{proof}
\begin{prop}\label{prop:simpleNMF}
If $\underline{a}$ is NMF and $\abs{\hat{\underline{a}}} \ge 1$ then:
\[ \constC_{\underline{a}}(E_{R^{\underline{a}}} - 1) + \underset{\abs{\underline{a}'} \ge 1}{\sum_{\underline{a}'\leq\hat{\underline{a}}}}(- \constD_{\underline{a}-\underline{a}'}\constC_{\underline{a}'})(E_{R^{\underline{a}'}} - 1) =
   \constC_{\underline{a}}(E_{R^{\underline{a}}} - E_{R^{\hat{\underline{a}}}}) .\]
\end{prop}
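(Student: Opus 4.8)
The plan is to cancel the common term $\constC_{\underline{a}}(E_{R^{\underline{a}}}-1)$ from both sides and reduce the identity to the claim that
\[ \underset{\abs{\underline{a}'}\ge 1}{\sum_{\underline{a}'\le \hat{\underline{a}}}} (-\constD_{\underline{a}-\underline{a}'}\constC_{\underline{a}'})(E_{R^{\underline{a}'}}-1) = \constC_{\underline{a}}(1 - E_{R^{\hat{\underline{a}}}}). \]
The crux is that the left-hand sum, though it ranges over all $\underline{a}'\le\hat{\underline{a}}$, collapses to a single surviving term. First I would record two elementary observations about the indexing. Since $\underline{a}'\le\hat{\underline{a}}$, the exponent $\underline{a}'$ is supported on the multiplicity-one part of $\underline{a}$ and is therefore MF; and because $\underline{a}$ is NMF, every coordinate $i$ with $a_i\ge 2$ satisfies $\hat{a}_i=0$, hence $a'_i=0$ and $(\underline{a}-\underline{a}')_i = a_i \ge 2$. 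Consequently $\underline{a}-\underline{a}'$ is again NMF, and a direct inspection of coordinates gives $\widehat{\underline{a}-\underline{a}'} = \hat{\underline{a}}-\underline{a}'$, so that $\abs{\widehat{\underline{a}-\underline{a}'}} = \abs{\hat{\underline{a}}} - \abs{\underline{a}'}$.

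With these in hand, Proposition \ref{prop:constDNMF} applies to every $\underline{a}'\le\hat{\underline{a}}$ with $\underline{a}'\neq\hat{\underline{a}}$: for such $\underline{a}'$ we have $\underline{a}-\underline{a}'$ NMF and $\abs{\widehat{\underline{a}-\underline{a}'}}\ge 1$, whence $\constD_{\underline{a}-\underline{a}'}=0$. Thus only the index $\underline{a}'=\hat{\underline{a}}$ contributes, and it is a legitimate index precisely because $\abs{\hat{\underline{a}}}\ge 1$ by hypothesis. The left-hand side therefore reduces to
\[ -\constD_{\underline{a}-\hat{\underline{a}}}\,\constC_{\hat{\underline{a}}}\,(E_{R^{\hat{\underline{a}}}}-1) = \constD_{\underline{a}-\hat{\underline{a}}}\,\constC_{\hat{\underline{a}}}\,(1-E_{R^{\hat{\underline{a}}}}). \]

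The one genuinely new input, and the step I expect to be the main obstacle, is evaluating $\constD_{\underline{a}-\hat{\underline{a}}}$, since here $\abs{\widehat{\underline{a}-\hat{\underline{a}}}}=0$ and Proposition \ref{prop:constDNMF} no longer applies. Writing $\underline{c}=\underline{a}-\hat{\underline{a}}$, every nonzero coordinate of $\underline{c}$ is $\ge 2$. I would argue, in the spirit of Propositions \ref{prop:constD} and \ref{prop:constDNMF}, that the factorization sum defining $\constD_{\underline{c}}$ retains only its length-one term: in any admissible tuple $(\underline{c}_1,\ldots,\underline{c}_k)$ with pairwise disjoint supports, each nonempty $\underline{c}_j$ must carry the full coordinate value $\ge 2$ of every divisor in its support, so every nonempty piece is NMF; since the first $k-1$ pieces are required to be MF yet each has $\abs{\underline{c}_j}\ge 1$, this forces $k=1$ and the single factorization $(\underline{c})$. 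Hence the signed count equals $1$ and $\constD_{\underline{c}}=\constC_{\underline{c}}$, that is, $\constD_{\underline{a}-\hat{\underline{a}}}=\constC_{\underline{a}-\hat{\underline{a}}}$.

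Finally I would conclude using disjointness: the exponents $\underline{a}-\hat{\underline{a}}$ and $\hat{\underline{a}}$ have disjoint support, namely the multiplicity-$\ge 2$ part versus the multiplicity-one part of $\underline{a}$, so Proposition \ref{prop:constC} gives $\constC_{\underline{a}-\hat{\underline{a}}}\constC_{\hat{\underline{a}}}=\constC_{\underline{a}}$. Substituting the value of $\constD_{\underline{a}-\hat{\underline{a}}}$ from the previous step turns the surviving term into $\constC_{\underline{a}}(1-E_{R^{\hat{\underline{a}}}})$, which is exactly the reduced right-hand side, completing the proof.
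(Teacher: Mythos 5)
Your proof is correct and follows essentially the same route as the paper's: observe that for every $\underline{a}'\leq\hat{\underline{a}}$ with $\underline{a}'\neq\hat{\underline{a}}$ the exponent $\underline{a}-\underline{a}'$ is NMF with $\abs{\widehat{\underline{a}-\underline{a}'}}\geq 1$, so Proposition \ref{prop:constDNMF} kills all such terms, and then evaluate the surviving term via $\constD_{\underline{a}-\hat{\underline{a}}}\constC_{\hat{\underline{a}}}=\constC_{\underline{a}}$. You merely supply details the paper leaves implicit, namely that only the length-one factorization contributes to $\constD_{\underline{a}-\hat{\underline{a}}}$ (so $\constD_{\underline{a}-\hat{\underline{a}}}=\constC_{\underline{a}-\hat{\underline{a}}}$) together with the disjoint-support multiplicativity of Proposition \ref{prop:constC}.
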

\begin{proof}
When $\underline{a}$ is NMF  and $\abs{\hat{\underline{a}}} \ge 1$ the same will be true for $\underline{a}-\underline{a'}$ for all choices of $\underline{a}'$ except $\underline{a}'=\hat{\underline{a}}$.
We thus have by Proposition \ref{prop:constDNMF} that:
\[ \constC_{\underline{a}}(E_{R^{\underline{a}}} - 1) + \underset{\abs{\underline{a}'} \ge 1}{\sum_{\underline{a}'\leq\hat{\underline{a}}}}(- \constD_{\underline{a}-\underline{a}'}\constC_{\underline{a}'})(E_{R^{\underline{a}'}} - 1) =
 \constC_{\underline{a}}(E_{R^{\underline{a}}} - 1)  - \constD_{\underline{a}-\hat{\underline{a}}}\constC_{\hat{\underline{a}}}(E_{R^{\hat{\underline{a}}}} - 1). \]
By noting that $ \constD_{\underline{a}-\hat{\underline{a}}}\constC_{\hat{\underline{a}}} = \constC_{\underline{a}}$ the result now follows immediately.
\end{proof}

\section{Handling Self Intersections}\label{sec:logeulerself}

The purpose of this section is to describe a method for interpreting the logarithmic Euler characteristic when there are self intersection terms. In particular we will show that  these can be viewed as a weighted sum of the Euler characteristics of the components of the self intersection.
The expressions one obtains are `non-canonical' but may be amenable to computation depending on the context. 

In order to carry out the procedure outlined here, one needs to have a good understanding of the Chow ring of the variety $X$. In particular the process may require a large number of relations consisting entirely of elements with simple normal crossings.
The reason we need an alternate approach is that though ideally we would be able to write:
\[ D^\ell \chern_i(\Omega_X(\log D)) = \chern_i(\Omega_{D^\ell}),  \]
this is simply not true if $\ell > 1$.
In order to handle this, we must have at least enough information to compute $D^\ell$.
In particular we will need to make use of relations:
\[ D\sim \sum_i u_{i}E_{i} \]
with the $E_i$ not being equal to any other divisor already in use, and with the total collection $E_{i}$, $D$, and every other divisor in use, having simple normal crossings.

\begin{lemma}
Let $X$ be a smooth projective variety and let $\cF$ be any coherent sheaf on $X$.
Suppose $\Delta$ is a collection of smooth divisors with simple normal crossings on $X$.
Fix $D \in \Delta$ and a relation 
\[ D\sim \sum_{i\in I} u_{i}E_{i} \]
with simple normal crossings as above.
We may rewrite:
\[  D^{\underline{a}} D^\ell Q_m(\chernchar(\cF); \chern_1(\Omega_X(\log\Delta)),\ldots,\chern_m(\Omega_X(\log\Delta)))  \]
as:
\[ D^{\underline{a}} D^{\ell-1} \sum_{k=1}^m  (-1)^{k-1}\constC_{(k-1)} \sum_i u_{i} E_i^k Q_{m-k+1}(\chernchar(\cF); \chern_1(\Omega_X(\log \Delta\cdot E_i)),\ldots,\chern_{m-k+1}(\Omega_X(\log \Delta\cdot E_i))).
\]
The constant $\constC_{(k-1)}$ is defined in \ref{not:2}.(1).
\end{lemma}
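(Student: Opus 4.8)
The plan is to peel off a single factor of $D$ using the given linear equivalence and then absorb the resulting divisor $E_i$ into the logarithmic boundary. Writing $D^\ell = D^{\ell-1}\cdot D$ and substituting $D\sim\sum_{i\in I}u_iE_i$, I can factor out $D^{\underline{a}}D^{\ell-1}$ and reduce to establishing, for each $i$, a comparison of $E_i\,Q_m(\chernchar(\cF);\chern_\bullet(\Omega_X(\log\Delta)))$ with the analogue for the enlarged boundary $\Delta\cdot E_i$. The hypothesis on the relation guarantees that $E_i$ is not among the divisors already in use and that $\Delta\cup\{E_i\}$ still has simple normal crossings, so $\Omega_X(\log\Delta\cdot E_i)$ is defined and Proposition \ref{prop:LogChernvsChern} applies to it.

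First I would compare the two logarithmic cotangent bundles. Applying Proposition \ref{prop:LogChernvsChern} to the boundaries $\Delta$ and $\Delta\cup\{E_i\}$ and dividing the two resulting identities gives
\[ \chern(\Omega_X(\log\Delta)) = \chern(\Omega_X(\log\Delta\cdot E_i))\,(1-E_i), \]
so the two total Chern classes differ exactly by the factor $1-E_i$, which is the total Chern class of a line bundle. Next I would feed this into the multiplicativity of $Q$: setting $u_j=\chern_j(\Omega_X(\log\Delta\cdot E_i))$ and letting $v_\bullet$ be the graded pieces of $1-E_i$ (so $v_0=1$, $v_1=-E_i$, and $v_j=0$ for $j\ge 2$), Proposition \ref{prop:multQ} yields
\[ Q_m(\chernchar(\cF);\chern_\bullet(\Omega_X(\log\Delta))) = \sum_{p+q=m} Q_p(\chernchar(\cF);u_1,\ldots,u_p)\,Q_q(1;v_1,\ldots,v_q). \]

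The crux is then to identify the correction factor $Q_q(1;-E_i,0,\ldots,0)$. This is the one-divisor specialization of the computation carried out in the proof of Theorem \ref{thm:eulvslogeul}: for a single boundary divisor $E_i$ the only monomial of degree $q$ is $E_i^q$, whose monomial type is $(q)$, and hence
\[ Q_q(1;-E_i,0,\ldots,0) = (-1)^q\constC_{(q)}E_i^q. \]
Substituting this back, multiplying through by $E_i$, reinserting $u_i$, summing over $i$, and restoring the prefactor $D^{\underline{a}}D^{\ell-1}$, I would reindex by $k=q+1$ (so $Q_p=Q_{m-q}=Q_{m-k+1}$ and $E_i\cdot E_i^q=E_i^k$) to obtain
\[ D^{\underline{a}}D^{\ell-1}\sum_{k}(-1)^{k-1}\constC_{(k-1)}\sum_i u_iE_i^k\,Q_{m-k+1}(\chernchar(\cF);\chern_\bullet(\Omega_X(\log\Delta\cdot E_i))), \]
which is the asserted formula.

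The hard part will be the single-divisor identification of $Q_q(1;-E_i,0,\ldots,0)$ with $(-1)^q\constC_{(q)}E_i^q$: this is precisely where one must invoke that $\constC_{\underline{b}}$ depends only on the monomial type and specialize the multi-divisor formula of Theorem \ref{thm:eulvslogeul} to the single divisor $E_i$, being careful that only $v_1=-E_i$ survives because $1-E_i$ is the Chern polynomial of a line bundle. Everything else — peeling off exactly one power of $D$, the reindexing $k=q+1$, and tracking the sign $(-1)^{k-1}$ — is routine bookkeeping once this specialization is in hand. The only point in the range of summation that warrants a remark is the extreme term $q=m$ (i.e.\ $k=m+1$), which carries the factor $Q_0(\chernchar(\cF))=\chernrnk(\cF)$ and so contributes a pure rank term $\constC_{(m)}\chernrnk(\cF)E_i^{m+1}$; one should confirm its treatment against the convention fixed for the range of $k$.
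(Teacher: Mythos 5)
Your proposal is correct and follows essentially the same route as the paper: the paper's entire proof is the remark that the lemma ``follows immediately by a comparison'' between $Q_m$ evaluated at $\chern_\bullet(\Omega_X(\log\Delta))$ and at $\chern_\bullet(\Omega_X(\log\Delta\cdot E_i))$ as in Theorem \ref{thm:eulvslogeul}, which is exactly the chain you fill in --- Proposition \ref{prop:LogChernvsChern} giving $\chern(\Omega_X(\log\Delta))=\chern(\Omega_X(\log\Delta\cdot E_i))(1-E_i)$, the multiplicativity of Proposition \ref{prop:multQ}, and the single-divisor identification $Q_q(1;-E_i,0,\ldots,0)=(-1)^q\constC_{(q)}E_i^q$, followed by the reindexing $k=q+1$. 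Your closing caveat about the extreme term is also well taken: the reindexed sum naturally runs over $k=1,\ldots,m+1$, with the $k=m+1$ term contributing $(-1)^{m}\constC_{(m)}\chernrnk(\cF)E_i^{m+1}$ via $Q_0(\chernchar(\cF))=\chernrnk(\cF)$, and the paper's own worked example in Section \ref{sec:logeulerself} (the terms $DE_1^3\constC_{(2)}Q_0(\chernchar(\cF);)$) shows this term is indeed kept, so the upper limit $m$ in the displayed statement is an off-by-one slip rather than a gap in your argument.
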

\begin{proof}
This follows immediately by a comparison between 
\[ Q_m(\chernchar(\cF); \chern_1(\Omega_X(\log \Delta)),\ldots,\chern_m(\Omega_X(\log \Delta)) )  \]
and
\[ \qquad Q_m(\chernchar(\cF); \chern_1(\Omega_X(\log \Delta\cdot E_i)),\ldots,\chern_m(\Omega_X(\log \Delta\cdot E_i))) \]
as in Theorem \ref{thm:eulvslogeul}.
\end{proof}

\begin{lemma}
Let $X$ be a smooth projective variety and let $\cF$ be any coherent sheaf on $X$.
Let $\Delta$ be a collection of smooth divisors with simple normal crossings on $X$.

Suppose we are given sufficiently many rules in the Chow ring of $X$ of the form:
\begin{align*}
 (a)\quad D_j \sim \sum_{i\in I_{ja}} u_iE_i \qquad \qquad\text{and} \qquad \qquad
 (b)\quad  E_j \sim \sum_{i\in I_{jb}} u_iE_i 
\end{align*}
expressed with respect to a collection of divisors  $E_i$ indexed by $I = \sqcup I_{ja}$, a universal family of shared indices and such that the total collection of divisors $D_i, E_j$ has simple normal crossings, then 
we may rewrite:
\[  D^{\underline{a}} D^\ell Q_m(\chernchar(\cF); \chern_1(\Omega_X(\log \Delta)),\ldots,\chern_m(\Omega_X(\log \Delta)))  \]
as a weighted sum of terms:
\[ D^{\underline{\tilde{a}}}E^{\underline{b}}Q_{n-\abs{\underline{\tilde{a}}}-\abs{\underline{b}}}(\chernchar(\cF); \chern_1(\Omega_X(\log \Delta\cdot E^{\underline{b}})),\ldots,\chern_{n-\abs{\underline{\tilde{a}}}-\abs{\underline{b}}}(\Omega_X(\log \Delta\cdot E^{\underline{b}}))) \]
with $b_i \leq 1$.
\end{lemma}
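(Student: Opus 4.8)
The plan is to treat the preceding Lemma as a single atomic reduction step and iterate it in a controlled order. That step takes a monomial carrying a self-intersection $D^\ell$ of a divisor $D$ lying in the current logarithmic boundary and, via a relation of type (a), lowers the exponent of $D$ by one at the cost of introducing a sum of terms $E_i^k$ (for $k\ge 1$) in which each $E_i$ has been absorbed into the logarithmic boundary and the degree of $Q$ has dropped by $k-1$. First I would apply the type-(a) relations to the given $D^\ell$, repeating until the exponent of each original divisor has been brought down to its multiplicity-free value; this is what produces the factor $D^{\underline{\tilde a}}$ and trades the excess powers for monomials in the $E_i$. I would then apply the type-(b) relations in exactly the same fashion to the self-intersections $E_i^k$ that have appeared, driving every exponent $b_i$ down to at most $1$. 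Each individual rewriting is literally an instance of the preceding Lemma, and the degree bookkeeping — the passage from $Q_m$ to $Q_{m-k+1}$ together with the growth of the monomial by $k$ — is forced so as to keep the total degree equal to $n$, which yields precisely the stated shape $D^{\underline{\tilde a}}E^{\underline b}Q_{n-\abs{\underline{\tilde a}}-\abs{\underline b}}(\log \Delta\cdot E^{\underline b})$.

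The subtlety, and the main obstacle, is termination. A single reduction lowers the exponent of the divisor being eliminated by one, but it can create a self-intersection $E_i^k$ with $k$ as large as the current degree of $Q$; consequently neither the total excess $\sum_V \max(0,\operatorname{mult}_V-1)$ nor the degree of $Q$ is by itself monotone, and the largest multiplicity occurring in a term may even increase after a step. To control this I would exploit the triangular structure encoded in the hypothesis that the $E_i$ form a universal family of shared indices with sufficiently many rules: fix once and for all a linear order on the divisors so that every relation of type (b) expresses $E_j$ strictly in terms of divisors that are smaller in this order, and agree to always reduce the largest divisor whose exponent still exceeds $1$. Reducing that divisor to multiplicity-free form then only manufactures self-intersections of strictly smaller divisors, which are handled at later stages.

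With this convention the natural measure is the full vector of multiplicities, with coordinates listed in decreasing order of the chosen linear order and compared lexicographically. I would check that reducing the most significant divisor $V$ of multiplicity $\ge 2$ strictly decreases this measure: all divisors more significant than $V$ already have multiplicity $\le 1$ and are untouched, the coordinate of $V$ itself drops by one, and every newly created self-intersection involves only strictly smaller divisors and so alters only later, less significant coordinates. Since there are finitely many divisors and the monomial degree is bounded above by $n$ — a monomial of degree exceeding $n$ forces $Q$ into negative degree and hence vanishes — the measure ranges over a well-ordered set and the process halts after finitely many steps at a sum of multiplicity-free terms. The one point demanding care beyond this is to pin down what ``sufficiently many rules'' must mean, namely that the triangular system is complete enough that a relation of the required type is available at every reduction; granting this, the stated rewriting follows.
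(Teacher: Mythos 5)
Your proposal is correct, and its skeleton---iterate the preceding Lemma until all exponents are at most one---is exactly the paper's; where you genuinely diverge is in the termination argument, which is the entire content of this proof. The paper's argument is lighter: each application of the preceding Lemma replaces one factor of the divisor being reduced by terms $u_iE_i^kQ_{m-k+1}$, and the paper observes that every resulting term either has strictly fewer self-intersections (the $k=1$ terms, using that the $E_i$ occurring in a relation are fresh, i.e.\ ``not equal to any other divisor already in use'' per the discussion before the first lemma of this section) or a strictly smaller subscript on $Q$ (the $k\ge 2$ terms). Thus the lexicographic pair (subscript of $Q$, number of self-intersections) strictly decreases, with no global order on the divisors and no prescribed reduction strategy. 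You dismissed this route slightly too quickly: your observations that the total excess $\sum_V\max(0,\operatorname{mult}_V-1)$ can grow and that the top multiplicity can jump are correct, and the degree of $Q$ is only weakly decreasing, but the lexicographic combination of the two does strictly decrease, precisely because the only steps that increase the excess ($k\ge 3$) are those that lower the degree of $Q$. Your substitute---a triangular order on the divisors in which every rule rewrites a divisor in terms of strictly smaller ones, the convention of always reducing the most significant offender, and lexicographic comparison of the full multiplicity vector---is a valid well-founded measure and does prove termination, at the cost of an extra structural hypothesis and a fixed strategy; note, though, that the triangularity you impose is essentially a repackaging of the paper's freshness requirement, so your reading of ``sufficiently many rules'' is faithful to the intended setup (compare the worked example, where two distinct rules $E_1\sim E_3$ and $E_1\sim E_4$ are needed to exhaust the powers of $E_1$), and your closing caveat about pinning down that phrase identifies the one point the paper itself leaves informal.
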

\begin{proof}
The key is to inductively apply the previous lemma.

We observe that at each application of the lemma we produce new terms of the form:
 \[ D^{\underline{a}'}E^{\underline{b}'}Q_{n-\abs{\underline{a}'}-\abs{\underline{b}'}}.
\]
However, each new term introduced either satisfies:
\begin{enumerate}
\item The number of self intersections has been decreased, or
\item The subscript on $Q_m$ has decreased.
\end{enumerate}
It follows that the inductive process terminates provided we have enough rules to carry it out.
\end{proof}

\begin{prop}
In the setting of the Lemma,
the coefficient of 
\[ D^{\underline{\tilde{a}}}E^{\underline{b}}Q_{m}(\chernchar(\cF); \chern_1(\Omega_X(\log \Delta\cdot E^{\underline{b}})),\ldots, \chern_m(\Omega_X(\log \Delta\cdot E^{\underline{b}})) \]
in the formal expansion of 
\[  D^{\underline{a}} Q_m(\chernchar(\cF); \chern_1(\Omega_X(\log \Delta)),\ldots,\chern_m(\Omega_X(\log \Delta)))  \]
is:
\[ \prod_i (u_i^{b_i} \constC_{(y_i)}) \]
where 
$ y_j = \abs{\underset{z}\cup I_{jz} \cap \underline{b}},$
that is, $y_j$ is the number of rules that must be used in the expansion of $E_j$.
\end{prop}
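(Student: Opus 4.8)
The plan is to prove the formula by iterating the first Lemma of this section, using it as the single reduction step and tracking the coefficient of the fixed target monomial $D^{\tilde{\underline{a}}}E^{\underline{b}}Q_m$ throughout. Each application of that Lemma replaces one (self-)intersecting factor by $\sum_{k\ge 1}(-1)^{k-1}\constC_{(k-1)}\sum_i u_i E_i^k$ while enlarging the logarithmic boundary by $E_i$, so iterating it over the type-(a) rules for the $D_j$ and the type-(b) rules for the intermediate $E_j$ expresses $D^{\underline{a}}Q_m(\chernchar(\cF);\chern_1(\Omega_X(\log\Delta)),\ldots)$ as a sum, indexed by the admissible chains of rule applications, of monomials of the required shape. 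First I would record that every surviving factor $E_i$ in $E^{\underline{b}}$ is introduced by selecting the $E_i$-summand of exactly one rule, which accounts at once for the factor $\prod_i u_i^{b_i}$; what remains is to determine and sum the numerical weight attached to each chain.

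The second step is to factorize this numerical weight over the index set. Because the rules are written over one family of shared indices with the whole collection $D_i,E_j$ in simple normal crossings, divisors belonging to different rules never interact, so distinct indices contribute independently and Proposition \ref{prop:constC} lets me split the weight as a product over $i$. For each $i$ the local contribution is governed by the $\constC_{(k-1)}$ factors produced whenever index $i$'s rule is invoked and the resulting self-intersection $E_i^k$ is subsequently resolved. Grouping the chains by the number $y_i=\abs{\cup_z I_{iz}\cap\underline{b}}$ of surviving summands attributed to that rule, I expect the signed sum over the intermediate self-intersection powers and over the orders in which the $E_i^k$ are resolved to collapse to the single value $\constC_{(y_i)}$, so that assembling the product via Proposition \ref{prop:constC} gives $\prod_i(u_i^{b_i}\constC_{(y_i)})$.

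The heart of the matter, and the step I expect to be the main obstacle, is exactly this collapse. It is a combinatorial identity of the same flavour as Propositions \ref{prop:constD} and \ref{prop:constDNMF}: once all orderings of the rule applications and all admissible distributions of self-intersection multiplicities are summed with their signs $(-1)^{k-1}$, everything should telescope to $\prod_i\constC_{(y_i)}$ with no residual combinatorial factor. I would establish it by the same recursive/bijective technique used there, running an induction on $\abs{\underline{b}}$ (equivalently on the number of rule applications) in which removing the last resolved divisor matches chains of two adjacent lengths and exhibits the cancellation. The bookkeeping that must be handled with care is the interplay between a divisor's self-intersection power and the subsequent enlargement of the logarithmic boundary, since it is precisely this interplay, already encoded in Theorem \ref{thm:eulvslogeul}, that converts the raw $\constC_{(k-1)}$ weights of the Lemma into the final $\constC_{(y_i)}$; verifying that this conversion respects the factorization across indices is where the argument is most delicate.
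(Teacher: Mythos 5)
Your first step matches the paper's argument, but your identification of the ``heart of the matter'' is mistaken: there is no signed sum over chains to collapse, and no identity of the flavour of Propositions~\ref{prop:constD} or \ref{prop:constDNMF} is needed. For a fixed final monomial $D^{\tilde{\underline{a}}}E^{\underline{b}}$, the chain of rule applications producing it in the formal expansion is \emph{unique}: each excess self-intersection of a divisor is resolved by a fresh rule for that divisor, so every index of $\underline{b}$ is the selected summand of exactly one rule application, and the exponent $k$ with which $E_i$ was created is reconstructible from $\underline{b}$ itself as $k = y_i + 1$. Consequently the coefficient is the literal product of the weights deposited along this one chain, and the only substantive verification --- the one the paper's proof points to --- is that the factor $\constC_{(k-1)}$ attached at the moment $E_i^{k}$ is created equals $\constC_{(y_i)}$, because its $k-1$ excess powers are subsequently removed by $k-1$ distinct rules for $E_i$, each contributing exactly one surviving index to $\underline{b}$ (this is precisely what conditions (1)--(3) of the proposition that follows encode). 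Your proposal never addresses this identification, which is the actual content of the statement, while the telescoping cancellation you place at the centre has no counterpart in the expansion.

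Had you executed your plan, it would miscount: different orderings of which self-intersection to resolve next are \emph{alternative complete expansions} of the same expression (the ordering convention in the subsequent proposition exists exactly to fix one), not terms of a single expansion to be added with signs; and for fixed $\underline{b}$ the intermediate multiplicities are forced, so your putative sum over chains has a single term. The sign bookkeeping also does not behave as you expect: each application of the Lemma contributes $(-1)^{k-1}\constC_{(k-1)}$, and since the chain is unique this sign sticks to its monomial rather than cancelling, yielding a net $(-1)^{y_i}$ alongside each $\constC_{(y_i)}$ --- indeed the Proposition and the worked Example suppress the signs that the Lemma carries, a discrepancy internal to the paper, but in no reading is there a cancellation mechanism of the kind you invoke. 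Finally, your appeal to Proposition~\ref{prop:constC} to factor the weight over indices is superfluous: the product structure is automatic because the weights multiply along the chain, one factor $u_i\constC_{(y_i)}$ per created divisor.
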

\begin{proof}

The appearance of the $\prod_i u_i^{b_i} \constC_{(y_i)}$ is apparent from the lemma, as these are precisely the terms that appear when we apply it. The only remaining question is the computation of $y_i$ based on the shape of $\underline{b}$. One readily checks the given formula.
\end{proof}

The only information we still lack about our expansion is which $E^{\underline{b}}$ actually appear. This depends on choices made during the inductive process, however, if one orders the rules one can obtain a systematic result. The following proposition is an immediate consequence of the inductive process.
\begin{prop}
Carrying out the inductive procedure as above, if the rules:
\begin{align*}
 (a)\quad D_j \sim \sum_{i\in I_{ja}} u_iE_i \qquad \qquad\text{and} \qquad \qquad
 (b)\quad  E_j \sim \sum_{i\in I_{jb}} u_iE_i 
\end{align*}
are ordered by $(a)$ and $(b)$ and we always select the first rule which does not conflict with choices already made then
the collection $E^{\underline{b}}$ which appear in the expansion are precisely those which satisfy:
\begin{enumerate}
\item $\abs{\underline{b}\cap I_{jc}} = 0,1$.
\item For each $D_j$, the number of $a$ for which $\abs{\underline{b}\cap I_{ja}} = 1$ is $a_j-1$.
\item $\abs{\underline{b}\cap I_{jc}} = 1$ and $c>0$ implies $\abs{\underline{b}\cap I_{j{c-1}}} = 1$.
\item $\abs{\underline{b}} \leq n-\abs{\tilde{\underline{a}}}$.
\end{enumerate}
\end{prop}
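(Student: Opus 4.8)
The plan is to trace the greedy inductive procedure of the preceding two lemmas and read off, leaf by leaf, which monomials $E^{\underline{b}}$ survive. First I would fix the bookkeeping: each application of the first lemma targets one self-intersecting factor, replacing a single power of the divisor being resolved by a sum $\sum_i u_i E_i^k$ according to the chosen rule, thereby either lowering a self-intersection multiplicity or, when $k \ge 2$, deferring resolution of the freshly created self-intersection $E_i^k$ to a later step governed by a rule of type (b). Iterating produces a finite tree of expansions whose leaves are exactly the MF monomials $D^{\tilde{\underline a}} E^{\underline b} Q_{m}$, and the proposition amounts to identifying the set of $\underline b$ attached to these leaves under the prescribed ordering discipline.

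Next I would verify the four conditions by matching each to a structural feature of the procedure. Condition (2) is the counting statement that resolving $D_j^{a_j}$ down to a single surviving power requires exactly $a_j - 1$ applications of a type-(a) rule, each contributing precisely one index from the corresponding $I_{ja}$; this is immediate from the first lemma, which lowers the self-intersection exponent by one at each step. Condition (1) then follows because the leaves are MF and a single use of any rule introduces a single index $i$ from its index set $I_{jc}$, so no index set can contribute more than one element of the support of $\underline b$. Condition (3) encodes the greedy prescription ``always select the first non-conflicting rule'': a rule $I_{jc}$ with $c > 0$ is selected only after $I_{j,c-1}$ has already been consumed, so its use forces $|\underline b \cap I_{j,c-1}| = 1$. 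Condition (4) is the vanishing of any stratum that is an intersection of more than $n$ divisors: since $D^{\tilde{\underline a}} E^{\underline b}$ is a transverse (simple normal crossing) intersection of $|\tilde{\underline a}| + |\underline b|$ components, any term with $|\underline b| > n - |\tilde{\underline a}|$ is zero and does not appear.

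Finally I would argue the correspondence is exact, that every $\underline b$ satisfying (1)--(4) is realized and no others. One direction is the content of the previous paragraph; for the converse I would, given a target $\underline b$ obeying (1)--(4), reconstruct a consistent sequence of greedy choices producing it, using condition (3) to schedule the rule applications so that the ``first non-conflicting'' selection always lands on the intended index set, and using conditions (2) and (4) to guarantee the reconstruction terminates within the available dimension budget. The main obstacle I anticipate is precisely this surjectivity together with independence from scheduling: different legal orders in which one resolves the several self-intersecting divisors could a priori yield different leaf sets, so I would need to check that the ordering-by-rules convention makes the resulting set of $\underline b$ independent of the order in which distinct divisors are processed. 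Once that confluence is established, the four conditions cut out exactly the realized monomials and the proposition follows.
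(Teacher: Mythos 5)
Your proposal is correct and takes essentially the same approach as the paper, which in fact offers no written argument at all (the proposition is introduced as ``an immediate consequence of the inductive process''), so your leaf-by-leaf trace of the expansion tree---matching conditions (1) and (3) to the once-per-rule, first-non-conflicting selection discipline, condition (2) to the $a_j-1$ reductions of the exponent of $D_j$, and condition (4) to the vanishing of terms with $\abs{\underline{b}} > n - \abs{\tilde{\underline{a}}}$---is precisely the intended reading, elaborated in more detail than the paper provides. The surjectivity and scheduling-independence issue you flag is the one genuine point the paper's ``immediate consequence'' glosses over, and your plan of reconstructing a consistent sequence of greedy choices from a given $\underline{b}$ satisfying (1)--(4) is the right way to close it.
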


\begin{rmk}
Because $  D^{\underline{\tilde{a}}}E^{\underline{b}}Q_{m}(\chernchar(\cF); \chern_1(\Omega_X(\log \Delta\cdot E^{\underline{b}})),\ldots,\chern_m(\Omega_X(\log \Delta\cdot E^{\underline{b}}))) $
is computing a logarithmic Euler characteristic on $D^{\underline{a}}E^{\underline{b}}$ the above expansion gives a weighted sum of the logarithmic Euler characteristics for some representative cycles for various $D^{\underline{x}}$.
We note that $\prod_i u_i^{b_i}$ is somehow related to the coefficient that would have appeared had we been computing the self intersection whereas the coefficient $\prod_i  \constC_{(y_i)}$ is universal. None the less, we note that this process involves a number of non-canonical choices.

It is worth noting that by performing a further induction, as in Corollary \ref{cor:secondaryinduction}, we could replace the Logarithmic Euler characteristics with the actual Euler characteristics of the same components of the self intersections, simply with different weights.
\end{rmk}

\begin{ex}
Suppose we have relations:
\[  D \sim E_1+E_2  \qquad           E_1 \sim E_3  \sim E_4  \qquad E_2  \sim E_5 \sim E_6  \qquad E_3 \sim E_7\qquad E_5\sim E_8. \]
(Note that the implied relation $E_3\sim E_4$ (respectively $E_5\sim E_6$) is not being viewed as a rule for $E_3$ (respectively $E_5$).
Then we may carry out the procedure above as follows:
\begin{align*}
  D^2Q_2(&\chernchar(\cF); \chern_1(\Omega_X(\log \Delta)),\chern_2(\Omega_X(\log \Delta))) \\
              &= DE_1Q_2(\chernchar(\cF); \chern_1(\Omega_X(\log \Delta\cdot E_1)),\chern_2(\Omega_X(\log \Delta\cdot E_1)))\\&\qquad
                        +DE_1^2\constC_{(1)}Q_1(\chernchar(\cF);\chern_1(\Omega_X(\log \Delta\cdot E_1)) )
                        +DE_1^3\constC_{(2)}Q_0(\chernchar(\cF); )\\&\qquad
                        +DE_2Q_1(\chernchar(\cF); \chern_1(\Omega_X(\log \Delta\cdot E_2)),\chern_2(\Omega_X(\log \Delta\cdot E_2)))\\&\qquad
                        +DE_2^2\constC_{(1)}Q_1(\chernchar(\cF);\chern_1(\Omega_X(\log \Delta\cdot E_2)) )
                        +DE_2^3\constC_{(2)}Q_0(\chernchar(\cF); )\\
               &= DE_1Q_2(\chernchar(\cF); \chern_1(\Omega_X(\log \Delta\cdot E_1)),\chern_2(\Omega_X(\log \Delta\cdot E_1)))\\&\qquad
                    +DE_1E_3\constC_{(1)}Q_1(\chernchar(\cF);\chern_1(\Omega_X(\log \Delta\cdot E_1E_3)) )
                    +DE_1E_3^2\constC_{(1)}\constC_{(1)}Q_0(\chernchar(\cF); ) \\&\qquad
                    +DE_1E_3E_4\constC_{(2)}Q_0(\chernchar(\cF); ) \\&\qquad
                    +DE_2Q_1(\chernchar(\cF); \chern_1(\Omega_X(\log \Delta\cdot E_2)),\chern_2(\Omega_X(\log \Delta\cdot E_2)))\\&\qquad
                    +DE_2E_5\constC_{(1)}Q_1(\chernchar(\cF);\chern_1(\Omega_X(\log \Delta\cdot E_2E_5)) )
                    +DE_2E_5^2\constC_{(1)}\constC_{(1)}Q_0(\chernchar(\cF); ) \\&\qquad
                    +DE_2E_5E_6\constC_{(2)}Q_0(\chernchar(\cF); ).
\end{align*}  
Which we can ultimately express as:
\begin{align*}
               &= \chi(DE_1, \Delta', \cF|_{DE_1})  
                    +\constC_{(1)}\chi(DE_1E_3, \Delta', \cF|_{DE_1E_3})  \\&\qquad
                     +\constC_{(1)}\constC_{(1)}\chi(DE_1E_3E_7, \Delta', \cF|_{DE_1E_3E_7}) 
                     +\constC_{(2)}\chi(DE_1E_3E_4, \Delta', \cF|_{DE_1E_3E_4})  \\&\qquad
                     +  \chi(DE_2, \Delta', \cF|_{DE_2}) 
                     +\constC_{(1)}\chi(DE_2E_5, \Delta', \cF|_{DE_2E_5})  \\&\qquad
                   +\constC_{(1)}\constC_{(1)}\chi(DE_2E_5E_8, \Delta', \cF|_{DE_2E_5E_8})
                     +  \constC_{(2)}\chi(DE_2E_5E_6, \Delta', \cF|_{DE_2E_5E_6}).             
\end{align*}            
 In particular we can express the result purely as a sum of logarithmic Euler characteristics.       
\end{ex}

\section{Conclusions and Further Questions}

We have obtained a natural generalization of the Riemann-Hurwitz results to the algebraic Euler characteristic.
The formulas given are certainly more complicated than for the standard Euler characteristic.

It is natural to ask to what extent any of the results here can be generalized outside the context in which we are able to prove them.

\section*{Acknowledgments}

This work was primarily conducted while I was a Fields Postdoctoral researcher at Queen's University.
I would like to thank Prof. Mike Roth at Queen's University for teaching me many of the tools needed in this work as well as for his help improving this manuscript.
I would also like to thank the referee for many very useful comments.

\providecommand{\MR}[1]{}
\providecommand{\bysame}{\leavevmode\hbox to3em{\hrulefill}\thinspace}
\providecommand{\MR}{\relax\ifhmode\unskip\space\fi MR }
\providecommand{\MRhref}[2]{  \href{http://www.ams.org/mathscinet-getitem?mr=#1}{#2}
}
\providecommand{\href}[2]{#2}

\end{document}